\def\R {\mathbb{R}}
\def\S {\mathbb{S}}
\def\Z {\mathbb{Z}}
\def\S {\mathbb{S}}
\renewcommand{\phi}{\varphi}
\renewcommand{\epsilon}{\varepsilon}
\newcommand{\eps}{\varepsilon}
\renewcommand{\leq}{\leqslant}
\renewcommand{\le}{\leqslant}
\renewcommand{\geq}{\geqslant}
\renewcommand{\ge}{\geqslant}
\renewcommand{\div}{\mathrm{div}}
\newtheorem{proposition}{Proposition}[section]
\newtheorem{theorem}[proposition]{Theorem}
\newtheorem{corollary}[proposition]{Corollary}
\newtheorem{lemma}[proposition]{Lemma}
\theoremstyle{definition}
\newtheorem{definition}[proposition]{Definition}
\newtheorem{remark}[proposition]{Remark}
\numberwithin{equation}{section}
\title[Anisotropic mean curvature flow of Lipschitz graphs]{Anisotropic  mean curvature flow of Lipschitz graphs and convergence to self-similar solutions} 
\author[A. Cesaroni, H. Kr\"oner, M. Novaga]{}
\subjclass{ 53C44, 
 35K93 
 }
\keywords{Anisotropic mean curvature flow, self-similar solutions, long time behavior.}
 \email{annalisa.cesaroni@unipd.it}
 \email{heiko.kroener@uni-due.de}
 \email{matteo.novaga@unipi.it}
\thanks{The authors were supported by the INDAM-GNAMPA and by the PRIN Project 2019/24 {\it Variational methods for stationary and evolution problems with singularities and interfaces}.}
\begin{document}
\maketitle

\centerline{\scshape Annalisa Cesaroni }
\medskip
{\footnotesize
 \centerline{Department of Statistical Sciences, University of Padova}
   \centerline{Via Cesare Battisti 141, 35121 Padova, Italy}
} 
\medskip

\centerline{\scshape Heiko Kr\"oner}
\medskip
{\footnotesize
 \centerline{Universit\"at Duisburg-Essen, Fakult\"at f\"ur Mathematik}
 \centerline{Thea-Leymann-Stra\ss e 9, 45127, Essen, Germany}
 }
 
\medskip

\centerline{\scshape Matteo Novaga}
{\footnotesize
\centerline{Department of Mathematics, University of Pisa}
   \centerline{Largo Bruno Pontecorvo 5, 56127 Pisa, Italy}
}

\begin{abstract}
We consider the anisotropic  mean curvature flow of entire Lipschitz graphs. We  
prove existence and uniqueness of expanding  self-similar solutions which are asymptotic to a prescribed cone, and we
characterize the long time behavior of solutions, after suitable rescaling,
when the initial datum is a sublinear perturbation of a  cone. 
In the case of regular anisotropies, we prove the stability of self-similar solutions asymptotic to
strictly mean convex cones, with respect to perturbations vanishing at infinity. We also show the
stability of hyperplanes, with a proof which is novel also for the isotropic mean curvature flow.
\end{abstract}

\tableofcontents

\section{Introduction}
We consider the evolution of sets $t\mapsto E_t$ in $\R^{N+1}$ governed by the geometric law 
 \begin{equation}\label{mcf} \partial_t p\cdot \nu(p)= -\psi(\nu(p))H_\phi(p, E_t),\end{equation} 
 where $\nu(p)$ is the  exterior normal  at
 $p\in \partial E_t$, $\psi$ is  a positive, continuous, $1$-homogeneous function representing the mobility,  
 $\phi$ is a norm  representing the surface tension, and $H_\phi(p)$  is the anisotropic mean curvature of 
 $\partial E_t$ at $p$, associated with $\phi$, see Definition \ref{defanisotropy}. This evolution is an analogue of the classical (isotropic) mean curvature flow, 
 which corresponds to the case $\phi(x)=\psi(x)=|x|$ and it is studied  as model of crystal growth, see \cites{gp1, gp2, cmnp1, cmnp2}. 
 Existence and uniqueness of the level set flow associated to \eqref{mcf} have been obtained  for general  mobilities $\psi$ and purely crystalline norms $\phi$ in \cites{gp1,gp2}, 
  in the viscosity setting, whereas the case of general norms $\phi$ with convex  mobilities $\psi$  has been treated in \cites{cmnp1, cmnp2},
  in the distributional setting. 
 
 In this paper we consider  the evolution  of subgraphs of entire Lipschitz functions, in the  case in which either $\phi$ is regular (see \eqref{phiregular}) or  $\psi$ is a norm.  
 In particular, we will assume that there exists a Lipschitz continuous function  $u_0:\R^N\to \R$ such that the initial set $E_0$ coincides with $\{(x,z)\ |\ z\leq u_0(x)\}$. 
By monotonicity of the flow and invariance with respect to translations,  the evolution $E_t$ is  defined for all times and is the subgraph of a Lipschitz function, that is, 
$E_t=\{(x,z)\ |\ z\leq u(x,t)\}$. In  Section \ref{secass} we describe the main   properties  of this flow. 

Since the evolution is defined for all times, we are interested  in the analysis of the long time patterns of the evolution. We recall that evolution of entire Lipschitz graphs in the isotropic setting has been considered in  \cite{eh}, see also  \cite{clutt},  whereas the case of fractional mean curvature flow has been considered  recently by two of the authors in \cite{cn}.

As in the isotropic case,  the long time attractors of the flow starting from  entire Lipschitz graphs  are self-similar expanding solutions, defined as follows: 
\begin{definition} \label{defhom} 
An  expanding homothetic solution  is a solution    to \eqref{mcf} such that   $E_t= \lambda(t)E_1$ where $\lambda(1)=1$ and  $\lambda'(t)\geq 0$ for $t>1$. 
This is equivalent to assume that  $E_1$ is a solution to \begin{equation}\label{exp} c(  p\cdot \nu(p))=-\psi(\nu(p))  H_\phi(p  , E_1)\end{equation}  for some $c\geq 0$.  

  \end{definition} 
 
In Section \ref{sectionhom}  we characterize the graphical expanding solutions as evolutions  issuing from cones, that is, from Lipschitz graphs of positive $1$-homogeneous functions, see Theorem \ref{selfsimilar} and Proposition \ref{prop_backward}.  Moreover,  in  Theorem \ref{convthm2} we show that   Lipschitz graphical evolutions  to \eqref{mcf} asymptotically approach  self-similar expanding solutions, in an appropriate rescaled setting, provided the
initial graph is a  sublinear perturbations of  a cone. More precisely, we introduce the following time rescaling:
 \begin{equation}\label{resc} \tau(t):=\frac{\log(2t+1)}{2}\qquad \text{ and  }\qquad \tilde E_\tau:= \frac{1}{\sqrt{2t+1}} E_t, \end{equation}    
so that  the evolution \eqref{mcf} of the rescaled flow is governed by the geometric law \begin{equation}\label{kflowresc} 
\partial_\tau \tilde p\cdot  \tilde\nu(\tilde p)= -  \tilde p \cdot  \tilde \nu(\tilde p) -  \psi(\tilde \nu(\tilde p) )H_\phi(\tilde p, \tilde E_\tau)\end{equation} 
 and we show that $\tilde E_\tau$ converge locally in Hausdorff sense as $\tau \to +\infty$ to a solution to \eqref{exp}, with $c=1$. 

In the rest of the paper  we analyze   the long time behavior of the flow \eqref{mcf} without  rescaling in the case of regular anisotropies, see assumption \eqref{phiregular} below.  
In order to rule out possible oscillations  in time,  it is necessary to  assume some decay condition of the initial data at infinity, 
and in particular we will consider initial graphs which are asymptotically  flat or asymptotically approaching mean convex cones. 

In Section \ref{sectioncone} we consider self-similar expanding solutions starting from Lipschitz mean convex cones. 
In Theorem \ref{sconvex}, by constructing appropriate barriers, 
we show that such solutions are stable with respect to perturbations vanishing at infinity. 

In Section \ref{sectionhyper} we prove that if the initial surface is asymptotically approaching a hyperplane, then the evolution asymptotically  flattens out in Hausdorff sense. This result is obtained by comparison with large Wulff shapes, and by constructing appropriate $1$-dimensional periodic barriers to the evolution. This approach also provides a different proof for the same result in the isotropic setting, which was  obtained by integral estimates on the flow, see \cites{clutt,e}. 

 
\section{Preliminary definitions and results} \label{secass} 

We recall some definitions for anisotropies and related geometric flows (see for instance  \cite{bccn}).

\begin{definition}\label{defanisotropy}  
Let  $\phi:\R^{N+1}\to [0, +\infty)$ be  a positively $1$-homogeneous convex map,   such that $\phi(p)>0$ for all $p\neq 0$. We associate to the surface tension 
the anisotropy $\phi^0:\R^{N+1}\to [0+\infty)$ defined as $\phi^0(q):=\sup_{\phi(p)\leq 1} p\cdot q$, which is again convex and positively $1$-homogeneous. 
The anisotropic mean curvature of a set $E$ at a point $p\in \partial E$ is defined as
\[H_\phi(p, E)=\div_\tau(\nabla \phi(\nu(p))),  \]  when $\phi$ is regular,  where $\nu(p)$ is the exterior normal vector to $\partial E$ at $p$, and $\div_\tau$ is the tangential divergence, 
whereas  in the general case  it is defined using the subdifferential, 
\[ H_\phi(p,E)\in\div_\tau(\partial \phi(\nu(p))).\] 
\end{definition}

\begin{definition}[$W_{\phi^0}$-condition]\label{ballcondition} 
We define the  Wulff shape as  the convex compact set  \[W_{\phi^0} :=\{q\in\R^{N +1}\ | \phi^0(q) \leq 1\}.\] 
We say that $C\subseteq \R^{N+1}$ satisfies the interior (resp. exterior) $RW_{\phi^0}$-condition at $x\in\partial C$  if 
there exists   $y_x\in \R^{N+1}$ such that $R W_{\phi^0}+y_x\subseteq C$ and $x\in \partial (RW_{\phi^0}+y_x)$ (resp. there exists $y^x$ such that $RW_{\phi^0}+y^x\subseteq \R^{N+1}\setminus C$ 
and $x\in \partial (RW_{\phi^0}+y^x)$).
\end{definition} 

\begin{remark}\upshape 
Observe that if $\phi\in C^{2}(\R^{N+1}\setminus \{0\})$ and $\phi^2$ is uniformly convex, then also $\phi^0\in C^{2}(\R^{N+1}\setminus \{0\})$ and $(\phi^0)^2$ is uniformly convex. In this case the $W_{\phi^0}$-condition is equivalent to the standard (interior or exterior) ball condition. 
\end{remark} 

 We consider the geometric evolution law  \eqref{mcf} under the following assumptions on anisotropy and mobility:
 \begin{equation}\label{asspsi}
\psi:\R^{N+1}\to [0, +\infty) \text{ is  continuous, positively $1$-homogeneous, and }    \psi(p)>0  \qquad \forall p\neq 0
 \end{equation}  and   
  \begin{eqnarray}\label{phiregular} \text{either } &   \text{ $\phi\in C^{2}(\R^{N+1}\setminus \{0\})$ and $\phi^2$ is uniformly convex} \\
   \text{or} &  \text{ $\psi$ is convex.}\label{psinorm} 
 \end{eqnarray} 
 
 \begin{remark}\upshape By positive $1$-homogeneity there holds that $\nabla \phi(\lambda p)=\nabla \phi (p)$ for every $\lambda>0$ and $p\in \R^{N+1}$ and moreover
 that $\nabla \phi(p)\cdot p=\phi(p)$. 
 \end{remark} 

 We associate to the geometric flow \eqref{mcf} the following level set equation: 
 Given a uniformly continuous function $U_0:\R^{N+1}\to\R$ such that $E_0=\{p\in \R^{N+1}:\ U_0(p)\leq 0 \}$  and $\partial E_0=\{p\in \R^{N+1}:\ U_0(p)= 0 \}$,  we consider the solution $U(p,t)$ to  the following quasi-linear parabolic equation 
 \begin{equation}\label{pdelevel}
 \begin{cases} U_t-\psi(\nabla U)\div(\nabla\phi(\nabla U))=0\\U(p,0)=U_0(p).
 \end{cases} 
  \end{equation}
  
\begin{remark}\upshape 
When $\phi$ is sufficiently regular, that is, \eqref{phiregular} holds, the solution to \eqref{pdelevel} is intended in the sense of viscosity solutions, see \cites{bs, gigabook}, whereas in the general case in which $\phi$ is not smooth and $\psi$ is a norm, the solution  is intended as the level set distributional solution defined in \cite{cmnp1}. We recall also that the level set distributional solution is the locally uniform limit of viscosity solutions to \eqref{pdelevel}, when we approximate the anisotropy and the mobility with regular ones, see \cite{cmnp2}.  
 \end{remark} 
 
 We recall the following result about well posedness of the flow \eqref{mcf}. 
 
 \begin{theorem}\label{ex}  
There exists a unique  continuous solution $U$ to \eqref{pdelevel}, to be intended in the sense of viscosity solutions if \eqref{phiregular} holds, and in the distributional level set sense if \eqref{psinorm} holds, that is,  the level set flows defined as 
 \begin{eqnarray*}\label{outin} E^+_t&:=&\{p\in \R^{N+1}:\ U(p,t)\leq 0\}\\E^-_t&:=&\{p\in \R^{N+1}:\ U(p,t)< 0\} \end{eqnarray*}
 provide a solution (in the appropriate sense) to \eqref{mcf}.
 
 Moreover, if $U_0,V_0$ are two uniformly continuous functions such that $U_0\leq V_0$, then $U(p,t)\leq V(p,t)$ for all $t>0$ and $p\in \R^{N+1}$. 
 
Finally, if $U_0$ is Lipschitz continuous with Lipschitz constant $C$, then 
 \[ |U(p,t)-U(q,s)|\leq C |p-q|+C' \sqrt{|t-s|}\qquad \forall p,q\in \R^{N+1}, t,s\geq 0,\] 
 where the constant $C'$ depends on $C$.
 In particular,  if there exists a direction $\omega\in \R^{N+1}$ such that $U_0(p+\lambda\omega)>U_0(p)$ for every $\lambda>0$ and every $p\in \R^{N+1}$, then  $U(p+\lambda\omega,t)>U(p,t)$ for every $t>0$, $\lambda>0$, $p\in \R^{N+1}$.  
 \end{theorem}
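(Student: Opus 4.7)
This theorem collects standard well-posedness and regularity properties of the level-set PDE \eqref{pdelevel}. My plan is to invoke the existing solution frameworks for each setting and then derive the quantitative estimates from comparison combined with the symmetries of the equation and explicit Wulff-shape barriers.

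Existence, uniqueness and comparison. In the regular case \eqref{phiregular}, \eqref{pdelevel} is a geometric degenerate-parabolic PDE of the kind treated by the viscosity theory of Barles--Souganidis and Chen--Giga--Goto (\cite{bs, gigabook}), yielding a unique continuous viscosity solution and the comparison principle $U_0\leq V_0\Rightarrow U\leq V$. In the case \eqref{psinorm}, the distributional level-set construction of \cite{cmnp1} gives existence and uniqueness, and the stability result of \cite{cmnp2} identifies it as the locally uniform limit of viscosity solutions to smoothed anisotropies, from which comparison descends. In both settings the geometric (increasing-function) invariance shows that $E^\pm_t$ depend only on $E_0$ and not on the representative $U_0$.

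Spatial Lipschitz regularity. Since \eqref{pdelevel} is translation invariant and invariant under adding constants to $U$, the function $(p,t)\mapsto U(p+h,t)-C|h|$ is again a solution, with initial datum $U_0(\cdot+h)-C|h|\leq U_0(\cdot)$ whenever $U_0$ is $C$-Lipschitz. Comparison then gives $U(p,t)\geq U(p+h,t)-C|h|$, and by symmetry in $h$ the bound $|U(p+h,t)-U(p,t)|\leq C|h|$.

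H\"older $\tfrac{1}{2}$ regularity in time and strict monotonicity. For the time estimate I would employ the shrinking Wulff shape as a barrier: for some $\alpha=\alpha(\phi,\psi,N)>0$, the family $t\mapsto R(t)W_{\phi^0}$ with $R(t)^2=R_0^2-2\alpha t$ is an exact solution of \eqref{mcf}. Given $p_0$ and $t>0$, the $C$-Lipschitz subgraph of $U_0$ can be sandwiched from above and below by translated Wulff shapes of radius $R_0\sim C\sqrt{t}$ tangent at height of order $\sqrt{t}$ above/below $(p_0,U_0(p_0))$; comparison with their evolutions up to time $t$ yields $|U(p_0,t)-U_0(p_0)|\leq C'\sqrt{t}$, and combining with the spatial estimate and the semigroup property in time produces the two-point bound. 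Finally, comparison and translation invariance give $U(\cdot+\lambda\omega,t)\geq U(\cdot,t)$; to upgrade to strict inequality at every $(p,t)$ with $t>0$, I would appeal to a strong comparison principle for the quasilinear parabolic equation, combined with the pointwise strict separation of the initial data, which rules out contact at any interior point. I expect this last step to be the main obstacle, especially in the distributional case of \eqref{psinorm} where one lacks a pointwise PDE and strict inequality must be propagated through the smooth approximation scheme of \cite{cmnp2}.
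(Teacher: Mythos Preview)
Your treatment of existence, uniqueness, comparison, and the spatial Lipschitz bound is essentially identical to the paper's.

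The main divergence is in the time $\tfrac12$-H\"older estimate. The paper does not use Wulff shapes here; instead it compares $U_0$ with the cone barriers $V_0^\pm(p)=\pm C|p-p_0|+U_0(p_0)$, which bound $U_0$ above and below by the Lipschitz condition. The key observation is that the solution $V$ with initial datum $C|p-p_0|$ inherits the parabolic scaling $V(p-p_0,t)=r^{-1}V(r(p-p_0),r^2t)$ from the homogeneity of the operator, so $V(0,t)=\sqrt t\,V(0,1)$, and comparison plus time-translation invariance gives $|U(p_0,t+s)-U(p_0,s)|\le |V(0,1)|\sqrt t$ in one stroke. This sidesteps two issues in your sketch: first, $R(t)W_{\phi^0}$ is in general only a sub-/supersolution of \eqref{mcf}, not an exact solution (cf.\ Remark~\ref{wulff}), since $\psi$ need not be constant along $\partial W_{\phi^0}$; second, your phrasing (``the $C$-Lipschitz subgraph of $U_0$'', ``tangent at height \ldots above/below $(p_0,U_0(p_0))$'') conflates the graph of $U_0$ in $\R^{N+2}$ with the sublevel sets $\{U_0\le c\}\subset\R^{N+1}$ on which Wulff barriers actually act. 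A Wulff-shape argument can be salvaged at the sublevel-set level---$\{U_0\le U_0(p_0)+\eps\}$ contains a Wulff shape of radius $\sim\eps/C$ about $p_0$, which survives for time $\sim(\eps/C)^2$---but the cone barrier is the natural tool for Lipschitz data and delivers $C'$ directly.

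On the strict monotonicity in the direction $\omega$: the paper dispatches this with ``a similar argument'' (translation plus comparison), without invoking any strong maximum principle. You are right that, absent a uniform positive lower bound on $U_0(\cdot+\lambda\omega)-U_0$, upgrading $\ge$ to $>$ is not automatic; in the paper's intended application ($U_0(x,z)=z-u_0(x)$, $\omega=e_{N+1}$) the gap is exactly $\lambda$, so strict inequality follows from ordinary comparison. Your caution about the general distributional case is reasonable, but the paper does not take that detour.
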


\begin{proof} For the existence and  uniqueness of solutions to \eqref{pdelevel}, and the comparison principle, in the case that \eqref{phiregular} holds we refer to \cite{bs}, whereas for the general case in which \eqref{psinorm} holds we refer to \cite{cmnp1}. 

The last two properties are a consequence of the comparison principle and the fact that the differential operator is invariant by translations. Indeed,  if  $U_0$ is Lipschitz continuous, and $C$ is the Lipschitz constant of $U_0$, then for every fixed $h\in \R^{N+1}$, then  $U_h(p, t):=U(p+h,t)\pm C|h|$ is a solution to \eqref{pdelevel} with initial datum $U_0(p+h)\pm C|h|$. Since $U_0(p+h)-C|h|\leq U_0(p)\leq U_0(p+h)+C|h|$, by comparison there holds that $U(p+h,t)-C|h|\leq U(p,t)\leq U(p+h, t)+C|h|$, which implies that 
\[
|U(p,t)-U(q,t)|\leq C |p-q| \qquad \forall p,q\in \R^{N+1}, t> 0.
\]
 A similar argument shows that, if $U_0(p+\lambda\omega)>U_0(p)$ for every $\lambda>0$  then  $U(p+\lambda\omega,t)>U(p,t)$.
 
Finally, observe that if the initial datum is a cone, that is, $V_0(p)=C|p-p_0|$,  for some $p_0\in \R^{N+1}$, then by uniqueness, using the positive $1$-homogeneity of the Euclidean norm and the scaling properties of the operator, we get that the solution to \eqref{pdelevel1} satisfies $V(p-p_0,t)=\frac{1}{r} V(r(p-p_0), r^2 t)$ for every $r>0$ and $t\geq 0, p\in \R^{N+1}$. This implies in particular that \begin{equation}\label{ri} V(0, t)=\sqrt{t}V(0,1)\qquad \text{ for every $t>0$. }\end{equation} 

Therefore, to prove the H\"older continuity of $U$, we proceed as follows. We fix $p_0$ and observe that  $U_0(p)\leq C |p-p_0|+U_0(p_0)$. Hence, by comparison and using \eqref{ri}, we get that $U(p_0,t)\leq \sqrt{t}V(0,1)+U_0(p_0)$, where $V(0,1)$ is the solution to \eqref{pdelevel} with initial datum $C |p-p_0|$. Therefore we get that
$U(p_0,t)-U(p_0, 0)\leq V(0,1)\sqrt{t}.$ By translation invariance of the operator, we conclude that for every $s>0$,  \[U(p_0,t+s)-U(p_0, s)\leq V(0,1)\sqrt{t}.\] 
The other inequality is obtained analogously, taking as initial datum  $-C |p-p_0|$. 
\end{proof} 

 \begin{remark}\label{wulff}\upshape  
 It is easy to check that the rescaled Wulff shape $RW_{\phi^0}$, for $R>0$, satisfies $H_\phi(RW_{\phi^0})=\frac{N}{R}.$ Let  $\underline \psi= \min_{\nu\in\S^{N}} \psi(\nu)>0$, and $\overline \psi=\max_{\nu\in\S^{N}} \psi(\nu)>0$, and define $\underline R(t):=\sqrt{R^2-2\overline{\psi}Nt}$ and $\overline{R}(t):=\sqrt{R^2-2\underline{\psi}Nt}$, for $t$ sufficiently small. Then $\underline R(t)W_{\phi^0}$ is a subsolution to \eqref{mcf} with initial datum $RW_{\phi^0}$, whereas $\overline R(t)W_{\phi^0}$ is a supersolution to \eqref{mcf} with initial datum $RW_{\phi^0}$. This implies that $\underline R(t)W_{\phi^0}\subseteq W^-(t)\subseteq W^+(t)\subseteq \overline R(t)W_{\phi^0}$, where $W^\pm(t)$ is the level set solution to \eqref{mcf}, with initial datum $W_{\phi^0}$ as   defined in Theorem \ref{ex}. 
 \end{remark} 
 
In this paper we consider the case in which   the initial datum $E_0$ is the subgraph of an entire Lipschitz function. Up to a rotation of coordinates, we may assume that 
 \begin{equation}\label{initial} \exists u_0:\R^N\to \R, \text{ Lipschitz continuous such that }E_0=\{(x,z)\in \R^{N+1}\ |\ z\leq u_0(x)\}.
 \end{equation} 
A direct application of  Theorem \ref{ex} gives the following result on the  evolution of Lipschitz graphs. 

 \begin{corollary}\label{corex} 
 Assume that $E_0$ satisfies \eqref{initial}. Then the level set flow satisfies $\overline E_t^-= E_t^+=\{(x,z)\in \R^{N+1}\ |\ z\leq u(x,t)\}$, where $u(x,t)$
 is a continuous function such that  
 \[|u(x,t)-u(y,s)|\leq \|\nabla u_0\|_\infty |x-y|+K\sqrt{|t-s|}\] for some  $K>0$ depending only on the Lipschitz constant $\|\nabla u_0\|_\infty$ of $u_0$. 
 
When $\phi$ is regular, that is, \eqref{phiregular} holds, then $u$ is the viscosity solution to
  \begin{equation}\label{pdelevel1}
 \begin{cases} u_t+\psi(-\nabla u,1)\div(\nabla_x\phi(-\nabla u,1))=0\\u(x,0)=u_0(x).
 \end{cases} 
  \end{equation}
  
When $\phi$ is not regular and \eqref{psinorm} holds, then  the solution is intended  in the distributional sense as in \cite{cmnp1}, 
and coincides with the locally uniform limit of viscosity solutions to \eqref{pdelevel1} when $\phi, \psi$ are approximated by regular functions, see \cite{cmnp2}.  \end{corollary}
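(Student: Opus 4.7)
The plan is to apply Theorem \ref{ex} to the Lipschitz level-set initial datum $U_0(x,z) := z - u_0(x)$ and then to extract the asserted statements about the graph. For the graph structure, since $U_0(x, z + \lambda) > U_0(x, z)$ for every $\lambda > 0$, the last conclusion of Theorem \ref{ex} (with $\omega = e_{N+1}$) gives that $z \mapsto U(x, z, t)$ is strictly increasing; comparing with the constant shifts $U_0 \pm c$ shows that it also ranges over all of $\R$, so the level set $\{U(\cdot, t) = 0\}$ is the graph of a unique continuous function $u(\cdot, t)$. Strict monotonicity in $z$ moreover yields $\overline{\{U(\cdot, t) < 0\}} = \{U(\cdot, t) \leq 0\}$, whence the identification $\overline{E_t^-} = E_t^+ = \{z \leq u(x, t)\}$.

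For the Lipschitz bound in $x$ with $L := \|\nabla u_0\|_\infty$, the initial inclusion $E_0 - (h, L|h|) \subseteq E_0$ follows from $u_0(x+h) \geq u_0(x) - L|h|$ and persists in time by translation invariance and the comparison principle, giving $u(x+h, t) \geq u(x, t) - L|h|$ and hence, by symmetry, $|u(x+h, t) - u(x, t)| \leq L|h|$. For the Hölder bound in $t$, I would apply the two preceding steps to the cone initial data $v_0^\pm(x) := u_0(x_0) \pm L|x - x_0|$, which generate Lipschitz graph evolutions $v^\pm$; after reducing to the case $x_0 = 0$ and $u_0(x_0) = 0$ by translation invariance, the rescaling $w(y, s) := \lambda^{-1} v^+(\lambda y, \lambda^2 s)$ solves the same equation with the same cone initial datum $L|y|$, so uniqueness forces $v^+(\lambda y, \lambda^2 s) = \lambda\, v^+(y, s)$ and in particular $v^+(0, t) = \sqrt{t}\, v^+(0, 1)$; the analogous identity for $v^-$ combined with $v_0^- \leq u_0 \leq v_0^+$ and comparison gives $|u(x_0, t) - u_0(x_0)| \leq K(L) \sqrt{t}$. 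Restarting the flow at time $s$ and using that $u(\cdot, s)$ is still Lipschitz with constant $L$ by the previous step, the same argument then yields $|u(x_0, t) - u(x_0, s)| \leq K(L) \sqrt{|t - s|}$.

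For the PDE characterization, when $\phi$ is regular the test-function substitution $\Phi(x, z, t) := z - \varphi(x, t)$ together with the $1$-homogeneity of $\nabla \phi$ and $\psi$ converts viscosity sub- and supersolutions of \eqref{pdelevel} into viscosity sub- and supersolutions of \eqref{pdelevel1}, so $u$ is the viscosity solution. When $\psi$ is a norm but $\phi$ is not regular, the distributional framework of \cite{cmnp1} applies and the approximation scheme of \cite{cmnp2} preserves the graph structure in the locally uniform limit, since all the preceding estimates depend only on $L$. The main delicate point I foresee is the self-similar cone argument in the Hölder step, which requires transferring the scaling identity from the last part of the proof of Theorem \ref{ex} from the level-set setting down to the graph PDE \eqref{pdelevel1}; by contrast, the graph structure and the Lipschitz-in-$x$ estimate follow essentially immediately from Theorem \ref{ex}.
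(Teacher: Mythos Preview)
Your proposal is correct and matches the paper's intended argument, which simply records the corollary as a direct application of Theorem \ref{ex} without further proof. One shortcut worth noting: since $U_0(x,z+\lambda)=U_0(x,z)+\lambda$ and the level-set equation \eqref{pdelevel} is invariant both under vertical translation and under addition of constants, uniqueness forces $U(x,z+\lambda,t)=U(x,z,t)+\lambda$, hence $U(x,z,t)=z-u(x,t)$ exactly; this identity yields the graph structure, the sharp Lipschitz constant $\|\nabla u_0\|_\infty$, the H\"older bound, and the validity of the test-function substitution $\Phi=z-\varphi$ all at once, so your separate cone argument at the graph level becomes unnecessary.
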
 

Eventually we recall the following regularity results for solutions to \eqref{pdelevel1}. 

\begin{proposition} \label{reg}  Let $u_0$ be a Lipschitz continuous function. Assume that $\phi$ satisfies \eqref{phiregular},   and that $\nabla^2\phi,\, \psi$ belong to $C^{0,\beta}(\R^{N+1}\setminus \{0\})$ for some $\beta\in (0,1)$. 

Then  $u(\cdot,t)\in C^{2,\alpha}(\R^N)$  and $u(x,\cdot)\in C^{1,\frac\alpha 2}(0,+\infty)$ for every $(x,t)\in \R^N\times (0,+\infty)$
and for some $\alpha\in (0,1)$. Moreover 
  for every $t_0>0$ there exists a positive constant  $C$, depending on $t_0, \phi, \psi$ and the Lipschitz constant of $u_0$, such that 
  \begin{equation}\label{eqreg}
  \|\nabla u(\cdot,t)\|_{C^{1,\alpha}(\R^N)} + \| u_t(x,\cdot)\|_{C^{0,\frac\alpha 2}(t_0,+\infty)} \leq C ,
  \end{equation}
for every $(x,t)\in \R^N\times (t_0,+\infty)$.
 \end{proposition}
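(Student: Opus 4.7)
The plan is to view \eqref{pdelevel1} as a uniformly parabolic quasilinear equation in nondivergence form, differentiate it to obtain a linear equation for $\nabla u$, and then bootstrap via De Giorgi--Nash--Moser and parabolic Schauder estimates.

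First I would rewrite \eqref{pdelevel1} by expanding the divergence as
\[
u_t = \psi(-\nabla u, 1)\, \sum_{i,j=1}^N \phi_{p_i p_j}(-\nabla u, 1)\, u_{x_i x_j}.
\]
By Corollary \ref{corex}, $\|\nabla u(\cdot, t)\|_\infty \leq L := \|\nabla u_0\|_\infty$ for all $t$, so $(-\nabla u, 1)$ takes values in the compact set $K := \{(-p, 1) : |p| \leq L\} \subset \R^{N+1}\setminus\{0\}$. The next step is to check uniform parabolicity on $K$: by Euler's identity and the uniform convexity of $\phi^2$, the kernel of $\nabla^2 \phi(q)$ equals $\R q$, and for $q = (-p, 1) \in K$ this direction is transverse to $\mathrm{span}(e_1, \ldots, e_N)$; hence the $N \times N$ matrix $(\phi_{p_i p_j}(q))_{i,j=1}^N$ is positive definite with eigenvalues uniformly bounded above and away from zero. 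Combined with the positivity of $\psi$, this shows that the equation is uniformly parabolic with coefficients in $C^{0,\beta}$ in the gradient variable.

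Differentiating the equation in $x_k$, each component $v = u_{x_k}$ solves a linear uniformly parabolic divergence-form equation with bounded measurable coefficients. The De Giorgi--Nash--Moser theorem then yields $v \in C^{0,\alpha}$ on unit parabolic cylinders for some $\alpha \in (0,1)$; translation invariance of the equation in $(x, t)$, together with the uniform Lipschitz bound, upgrades this to a uniform bound on $\R^N \times [t_0, +\infty)$. Freezing the coefficients $a_{ij}(x, t) := \psi(-\nabla u, 1)\,\phi_{p_i p_j}(-\nabla u, 1)$, which are then Hölder continuous in $(x, t)$ thanks to the assumption $\psi, \nabla^2\phi \in C^{0,\beta}$, I would invoke classical parabolic Schauder estimates (as in Ladyzhenskaya--Solonnikov--Uraltseva or Lieberman) to conclude $u(\cdot, t) \in C^{2,\alpha}(\R^N)$ and $u(x, \cdot) \in C^{1,\alpha/2}$, with \eqref{eqreg} following by translation invariance.

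The principal obstacle is passing rigorously from the a priori viscosity (or distributional) notion of solution given by Theorem \ref{ex} to classical $C^{2,\alpha}$ regularity, and ensuring that the estimates are genuinely uniform in $x \in \R^N$ rather than only interior. Both are handled by the combination of the global Lipschitz bound from Corollary \ref{corex}, which keeps the coefficients in a fixed compact set bounded away from the degeneracy $q = 0$, and the translation invariance of the operator in $(x, t)$, which promotes interior Schauder estimates on unit parabolic cylinders into the desired global bounds.
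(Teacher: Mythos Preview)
Your overall strategy---use the uniform Lipschitz bound from Corollary~\ref{corex} to ensure uniform parabolicity away from the degeneracy, obtain a H\"older gradient estimate, then bootstrap via parabolic Schauder---is correct and is precisely what the references the paper invokes carry out. The paper itself takes a shortcut: it first assumes $u_0\in C^{2,\alpha}$, cites \cite[Section~6]{a} (see also \cites{lieb,lunardi}) for short-time existence of a classical solution, then \cite[Propositions~9.5 and~9.6]{a} for a $C^{2,\alpha}\times C^{1,\alpha/2}$ bound depending only on $\phi,\psi$ and $\|\nabla u_0\|_\infty$, which yields global existence and \eqref{eqreg}; for merely Lipschitz $u_0$ it approximates by smooth data and passes to the limit by stability. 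So in substance the two approaches coincide---the paper simply delegates the analysis to the literature and handles the passage from viscosity to classical solutions via approximation rather than by bootstrapping regularity directly.

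That said, there is a technical gap in your sketch. You assert that $v=u_{x_k}$ solves a linear divergence-form equation and then invoke De~Giorgi--Nash--Moser. This would be correct if the equation were $u_t=\div A(\nabla u)$, in which case indeed $v_t=\div(DA(\nabla u)\nabla v)$. Here, however, the mobility sits outside the divergence: the coefficient matrix $a_{ij}(p)=\psi(-p,1)\,\phi_{p_ip_j}(-p,1)$ fails the integrability condition $\partial_{p_l}a_{ij}=\partial_{p_j}a_{il}$ unless $\psi$ is constant, so it is not the Hessian of any scalar and the differentiated equation is \emph{not} in divergence form. The residual term is quadratic in $D^2u$, which is not a priori bounded, so neither DGNM nor a direct application of Krylov--Safonov to the differentiated equation suffices. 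The H\"older gradient bound nonetheless holds, but its proof requires the quasilinear parabolic machinery specific to equations of the form $u_t=a^{ij}(\nabla u)D_{ij}u$ (as in \cite[Chapter~12]{lieb} or \cite{a}), which is exactly what the paper cites rather than reproves.
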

\begin{proof}
If the initial datum $u_0$ belongs to $C^{2,\alpha}(\R^N)$, then by \cite[Section 6]{a} (see also \cites{lieb, lunardi}) there exists a solution 
$u:\R^N\times (0,T)\to \R$ for some $T>0$ such that  
$u(\cdot,t)\in C^{2,\alpha}(\R^N)$  and $u(x,\cdot)\in C^{1,\frac\alpha 2}(0,T)$ for every $(x,t)\in \R^N\times (0,T)$.
Then, by standard results  for quasilinear parabolic equations with H\"older continuous coefficients,  see e.g. \cite[Proposition 9.5, Proposition 9.6]{a},  
we have that the norm of the solution $u$ depends only on $\psi, \phi$ and the Lipschitz constant of $u_0$. 
It follows that $T=+\infty$ and \eqref{eqreg} holds.

If $u_0$ is only Lipschitz continuous we approximate $u_0$ with initial data in $C^{2,\alpha}(\R^N)$, and then conclude by 
stability of solutions to \eqref{pdelevel1} with respect to local uniform convergence. 

 \end{proof}
\begin{proposition} \label{reg1} 
Let $u_0\in C^{1,1}(\R^N)$ with   $\|u_0\|_{C^{1,1}(\R^N)}\leq C$ and let \eqref{phiregular} hold.  
 Then the viscosity solution $u$ of \eqref{pdelevel1} is uniformly of class $C^{1,\alpha}(\R^N)$, for any fixed $t>0$ and for all $\alpha\in (0,1)$,
 with  $C^{1,\alpha}$ norm bounded independently of $t>0$.  
 \end{proposition}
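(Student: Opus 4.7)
The plan is to exploit the extra regularity of $u_0$ to propagate a uniform $L^\infty$ bound on $u_t$, thereby recasting the equation at each time slice as an elliptic problem with a bounded right-hand side, and then to bootstrap up to arbitrary H\"older exponent $\alpha<1$. First I would reduce to a smooth setting: approximate $\phi,\psi$ by anisotropies satisfying $\nabla^2\phi,\psi\in C^{0,\beta}(\R^{N+1}\setminus\{0\})$, and $u_0$ by $C^{2,\alpha}(\R^N)$ data $u_0^\eps$ with $\|u_0^\eps\|_{C^{1,1}(\R^N)}\leq C+\eps$. By Proposition \ref{reg} the corresponding flow $u^\eps$ is classical, and by stability of viscosity solutions under local uniform convergence of data and operators it suffices to establish the $C^{1,\alpha}$ bound with constants independent of $\eps$.

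The key estimate is a uniform $L^\infty$ bound on $u^\eps_t$. From \eqref{pdelevel1} one reads
\[
u^\eps_t(\cdot,0)=-\psi(-\nabla u^\eps_0,1)\,\div\!\bigl(\nabla_x\phi(-\nabla u^\eps_0,1)\bigr),
\]
which belongs to $L^\infty(\R^N)$ with norm depending only on $\|u_0\|_{C^{1,1}(\R^N)}$ and on the (smoothed) anisotropies. Differentiating \eqref{pdelevel1} in $t$ shows that $u^\eps_t$ solves a linear uniformly parabolic equation: its linearisation is uniformly elliptic on $\{|\nabla u|\leq\|\nabla u_0\|_\infty\}$, a set preserved by the flow thanks to Corollary \ref{corex}. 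The comparison principle applied to this linear equation yields $\|u^\eps_t(\cdot,t)\|_{L^\infty(\R^N)}\leq K$ for every $t>0$, with $K$ independent of $t$ and $\eps$.

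Freezing $t>0$, the function $u^\eps(\cdot,t)$ now solves the uniformly elliptic quasilinear equation $\psi(-\nabla u,1)\,\div(\nabla_x\phi(-\nabla u,1))=-u_t(\cdot,t)$ with right-hand side bounded by $K$. Caffarelli-type viscosity $C^{1,\alpha_0}$ estimates yield a uniform bound $\|u^\eps(\cdot,t)\|_{C^{1,\alpha_0}(\R^N)}\leq C$ for some $\alpha_0\in(0,1)$, depending only on the ellipticity constants on $\{|\nabla u|\leq\|\nabla u_0\|_\infty\}$ and on $K$. Once $\nabla u^\eps(\cdot,t)\in C^{\alpha_0}$, the coefficients $\psi(-\nabla u^\eps,1)\nabla^2\phi(-\nabla u^\eps,1)$ of the linear second-order equation satisfied by $u^\eps(\cdot,t)$ are H\"older continuous in $x$, so Calder\'on--Zygmund $W^{2,p}$ estimates with bounded right-hand side give $\|D^2u^\eps(\cdot,t)\|_{L^p_{\mathrm{loc}}}\leq C(p)$ for every $p<\infty$, uniformly in $t$. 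Morrey's embedding then yields $\nabla u^\eps(\cdot,t)\in C^{0,1-N/p}$ uniformly, covering every $\alpha\in(0,1)$ by taking $p$ large; passing to $\eps\to 0$ preserves the bounds.

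The hard part will be the upgrade from the single H\"older exponent $\alpha_0$ produced by the elliptic viscosity machinery to every $\alpha<1$, uniformly in $t$. This forces the $W^{2,p}$ bootstrap, which in turn relies on the H\"older continuity of the coefficients, itself a consequence of the first $C^{1,\alpha_0}$ estimate and, crucially, of the uniform $L^\infty$ bound on $u_t$ inherited from the $C^{1,1}$ hypothesis on $u_0$. Without this extra regularity of $u_0$ the elliptic bound would degenerate as $t\to 0^+$, which is exactly the phenomenon the statement is designed to rule out.
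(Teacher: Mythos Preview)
Your proposal is correct and follows the same strategy as the paper: use the $C^{1,1}$ hypothesis on $u_0$ to propagate a uniform $L^\infty$ bound on $u_t$, then freeze $t$ and invoke elliptic regularity with bounded right-hand side.

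The paper's execution is more economical in two places. For the $u_t$ bound, rather than regularizing and applying the maximum principle to the linearized equation, the paper observes directly that $u_0(x)\pm C't$ (with $C'=\|\psi(-\nabla u_0,1)\div(\nabla_x\phi(-\nabla u_0,1))\|_\infty$) are a viscosity super- and subsolution, so comparison gives $|u(x,t)-u_0(x)|\le C't$; a translation-in-time argument then yields $|u(x,t+\tau)-u(x,t)|\le C'\tau$ for all $t,\tau>0$, hence $|u_t|\le C'$. This avoids the approximation layer entirely and stays at the viscosity level. For the elliptic step, the paper simply cites Trudinger's regularity theory for viscosity solutions to conclude $u(\cdot,t)\in C^{1,\alpha}$ for every $\alpha<1$ directly, whereas your Caffarelli-plus-Calder\'on--Zygmund bootstrap makes explicit the mechanism behind that citation. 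Your route is sound but longer; the paper's comparison argument for $u_t$ is the cleaner trick worth knowing.
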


\begin{proof}
First of all let $u(\cdot, t)$ be the viscosity solution to \eqref{pdelevel1}. By Corollary  \ref{corex}, it   is Lipschitz continuous with $\|\nabla u(x,t)\|_\infty\leq \|\nabla u_0\|_\infty$.   
  Let $C':= \| \psi(-\nabla u_0,1)\div(\nabla\phi(-\nabla u_0,1))\|_\infty$. 
 Note that $u_0(x)\pm C't$ are respectively  a supersolution and a subsolution to \eqref{pdelevel1}, so that by comparison principle, see Theorem \ref{ex}, we get
 \[u_0(x)-C't\leq u(x,t)\leq u_0(x)+C't\qquad \text{for all $t\geq 0$.}
 \] 
 Moreover for every $t\ge \tau>0$, the functions $u(x, t)\pm \sup_x|u(x, \tau)- u_0(x)|$ are respectively a supersolution and a subsolution to \eqref{pdelevel1} 
 with initial datum $u(x, \tau)$, whence
 $$|u(x, t+\tau)- u(x,t)|\leq \sup_x|u(x, \tau)- u_0(x)|\leq C'\tau .$$ 
 This implies that $u(x, \cdot)$ is Lipschitz continuous with  $|u_t(x, t)|\leq C'$,  which in turn gives, recalling that $\nabla u$ is bounded, that 
 \[ -C\leq  \div(\nabla\phi(-\nabla u,1))\leq C\qquad\text{ for all $x\in \R^N$ and $t>0$.   }\] 
 By elliptic regularity theory for viscosity solutions (see \cite{tru}), this implies that for every $t>0$, $u(\cdot ,t)\in C^{1,\alpha}(\R^N)$ for every $\alpha<1$. \end{proof}

 \section{Self-similar expanding  solutions and convergence of the rescaled flow}\label{sectionhom} 
 We discuss the properties of solutions to \eqref{mcf} starting from Lipschitz cones, that is,   subgraphs of 
 Lipschitz continuous and positively $1$-homogeneous functions $\bar u$:
 \begin{equation}\label{hom}\exists C>0\ \  |\bar u (x)-\bar u(y)|\leq C|x-y|\qquad \bar u(rx)=r\bar u (x)\qquad \forall r>0, x,y\in \R^N.\end{equation} 
Then we consider the long time behavior of solutions   starting from sublinear perturbations of Lipschitz cones. 
 
 \begin{theorem}\label{selfsimilar}Let $\bar u:\R^N\to \R$  be as in \eqref{hom} and let $\bar E_0$ be the subgraph of $\bar u$. 
Then, for every $t>0$,  the evolution  $\bar E_t$ of \eqref{mcf} with initial datum $\bar E_0$  satisfies  for $p\in \partial \bar E_t$ 
 \begin{equation}\label{self}  
 p\cdot \nu=-2 t\, \psi(\nu(p)) H_\phi (p  , \bar E_t),\end{equation}  
 that is,  the flow starting from $\bar E_0$  is an expanding homothetic solution to \eqref{mcf}.
Writing $\bar E_t$  as subgraph of a function $\bar u(\cdot,t)$ we have    that for all $T>0$
\[
  \lim_{t\rightarrow \infty}\bar u(x,t+T)-\bar u(x,t)=0\quad  \text{ 
locally uniformly in $\mathbb{R}^N$}.\] 
Finally, if  either $\bar u\in C^1(\R^N\setminus \{0\})$ or  $\bar E_0$ satisfies the exterior and interior $R_xW_{\phi^0}-$condition at every $x\in\partial\bar E_0\setminus\{0\}$  (see Definition \ref{ballcondition}) with $R_x$ possibly depending on $x$, then  
\[  \lim_{|x|\rightarrow+\infty}\bar u(x,t)-\bar u(x)=0\qquad\text{ for every $t>0$.}\]
\end{theorem}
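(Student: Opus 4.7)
The plan is to handle the three assertions in sequence, all via scaling arguments and barrier comparison. First I would exploit the parabolic scaling invariance of the level set equation \eqref{pdelevel}: if $U$ solves it, so does $U_\lambda(p,t):=\lambda^{-1}U(\lambda p,\lambda^2 t)$ for every $\lambda>0$. Taking $U_0(x,z)=\bar u(x)-z$ and using positive $1$-homogeneity of $\bar u$, one has $U_0(\lambda p)=\lambda U_0(p)$, hence $U_\lambda(\cdot,0)=U_0$; uniqueness in Theorem \ref{ex} then forces $U(\lambda p,\lambda^2 t)=\lambda U(p,t)$. At the set level this reads $\bar E_t=\sqrt{t}\,\bar E_1$, and at the graph level $\bar u(x,t)=\sqrt{t}\,v(x/\sqrt{t})$ with $v:=\bar u(\cdot,1)$. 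Differentiating $p(t)=\sqrt{t}\,q$ for fixed $q\in\partial\bar E_1$ gives $\partial_t p=p/(2t)$, and inserting this identity into \eqref{mcf} produces exactly \eqref{self}.

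For the locally uniform convergence in time, the self-similar form lets one split
\[\bar u(x,t+T)-\bar u(x,t)=\sqrt{t+T}\bigl[v(x/\sqrt{t+T})-v(x/\sqrt{t})\bigr]+(\sqrt{t+T}-\sqrt{t})\,v(x/\sqrt{t}).\]
By Corollary \ref{corex}, $v$ is globally Lipschitz with the same constant $L$ as $\bar u$, so the first bracket is bounded by $L|x|\sqrt{t+T}\,|1/\sqrt{t+T}-1/\sqrt{t}|\leq L|x|T/(2t)$, and the second by $(\sqrt{t+T}-\sqrt{t})(|v(0)|+L|x|/\sqrt{t})\leq CT/\sqrt{t}$. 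Both bounds vanish as $t\to\infty$ uniformly for $x$ in a compact set and $T$ in a bounded interval, giving the claimed limit.

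For the behaviour at infinity, via the self-similar representation the claim $|\bar u(x,t)-\bar u(x)|\to 0$ as $|x|\to\infty$ reduces to $|v(y)-\bar u(y)|\to 0$ as $|y|\to\infty$. Under the $R_xW_{\phi^0}$-assumption, $1$-homogeneity rescales the ball condition (at $\lambda x$ the admissible radius becomes $\lambda R_x$), so at any boundary point $p_0=(x_0,\bar u(x_0))$ with $|x_0|\gg 1$ one has inner and outer tangent Wulff shapes of radius $R\sim|x_0|$. By Remark \ref{wulff}, each barrier moves by at most $R-\sqrt{R^{2}-2\overline\psi Nt}\lesssim t/R$ in time $t$, and the comparison principle from Theorem \ref{ex} confines $\partial\bar E_t$ to a tubular neighbourhood of $\partial\bar E_0$ of that width near $p_0$. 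Projecting the normal displacement onto the vertical direction, using the Lipschitz bound on $\bar u$, yields $|\bar u(x_0,t)-\bar u(x_0)|\lesssim t/|x_0|\to 0$. The $C^1$ alternative I would treat by approximation: mollify $\bar u$ on the unit sphere and extend by $1$-homogeneity to smooth cones $\bar u_\epsilon$ satisfying the $W_{\phi^0}$-condition off the origin, apply the previous argument, and pass to the limit. The main obstacle lies exactly in this approximation step: since $\|\bar u-\bar u_\epsilon\|_{L^\infty(\R^N)}$ is unbounded (both cones grow linearly), only locally uniform convergence $v_\epsilon\to v$ is available, and one must exploit the $\epsilon$-uniform decay rate $\sim t/|x|$ carefully to transfer the infinity limit to $v$ itself.
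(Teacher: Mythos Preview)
Your treatment of the first two assertions, and of the $W_{\phi^0}$-condition case in the third, is essentially the paper's argument: the scaling identity $\bar E_t=\sqrt{t}\,\bar E_1$ via uniqueness for \eqref{pdelevel}, the splitting of $\bar u(x,t+T)-\bar u(x,t)$ controlled by Lipschitz continuity of $\bar u(\cdot,1)$, and the Wulff-shape barrier yielding $|\bar u(x,t)-\bar u(x)|\lesssim t/R$ for $|x|$ large.

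The gap is in the $C^1$ alternative. Your mollification scheme requires the decay $|\bar u_\epsilon(x,t)-\bar u_\epsilon(x)|\leq C\,t/|x|$ with a constant \emph{independent of $\epsilon$}, but this is exactly what fails: the admissible Wulff radius for $\bar u_\epsilon$ at unit distance from the origin is governed by a curvature (i.e.\ $C^{1,1}$) bound on the mollified cone, and mollifying a merely $C^1$ function at scale $\epsilon$ produces second derivatives of order $1/\epsilon$. Hence $C=C_\epsilon\to\infty$, and the triangle-inequality balance against $|\bar u_\epsilon(y)-\bar u(y)|\sim|y|\,\omega(\epsilon)$ cannot be closed. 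You flag this as ``the main obstacle'' but then assert an $\epsilon$-uniform rate that is not available.

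The paper avoids approximation entirely. It observes that for a $C^1$ cone, $1$-homogeneity forces $\nabla\bar u(\lambda e+x)=\nabla\bar u(e+x/\lambda)\to\nabla\bar u(e)$ uniformly on compacta and uniformly in $e\in\mathbb{S}^{N-1}$, so near any boundary point far from the origin the graph is $C^1$-close to a hyperplane. This directly yields interior and exterior Wulff shapes of radius $\alpha(|x|)\to\infty$ tangent (up to an error $\varepsilon(|x|)\to 0$) at such points, and the same barrier comparison as in the $W_{\phi^0}$-case concludes. The point is that $C^1$ regularity plus homogeneity already gives asymptotic flatness; no smoothing is needed.
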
 

\begin{proof}
 By \eqref{hom} we get that for every $r>0$, there holds $r\bar E_0=\bar E_0$, and $H_\phi(r p,r \bar E_0)=r^{-1} H_\phi(p, \bar E_0)$ for all $r>0$ and all $p \in \partial \bar E_0$. Therefore, by uniqueness of solutions and by the rescaling properties of the operator, we get that $\bar E_t= r\bar E_{t/r^2}$ for all $t,r>0$. The previous rescaling identity gives 
\[\bar u(x,t)= \frac{1}{r} \bar u(rx, r^{2} t), \quad r,t>0\quad x \in \mathbb{R}^N.\] 
Letting $r:=t^{-\frac{1}{2}}$ for $t>0$, we get 
\begin{equation} \label{cono}\bar u(x,t)=t^{\frac{1}{2}}\bar u(xt^{-\frac{1}{2}}, 1). \end{equation} This implies that, if $p\in \partial \bar E_1$ then $p t^{\frac{1}{2}}\in \partial \bar E_t$ and  \begin{equation}\label{curv} H_{\phi}(p t^{\frac{1}{2}},\bar E_t)=  t^{-\frac{1}{2}}H_{\phi}(p  , \bar E_1).
\end{equation} 
Substituting in \eqref{mcf}  we get that $ \bar E_{1 }$ solves \eqref{exp} with $c^{-1}= 2$.  The same argument holds substituting $t=1$ with another time $t>0$. 
 
We observe that, by rescaling property \eqref{cono},   
and by the fact that $\bar u(\cdot,t)$ is Lipschitz continuous with the same Lipschitz constant as  $\bar u$, 
for every $T>0$ and $t>0$ we have
\begin{eqnarray*} |\bar u(x,t+T)-\bar u(x,t)|&=&  |(t+T)^{\frac{1}{2}}\bar u(x(t+T)^{-\frac{1}{2}}, 1)-t^{\frac{1}{2}}\bar u(xt^{-\frac{1}{2}}, 1)| \\
&\leq& (t+T)^{\frac{1}{2}}|\bar u(x(t+T)^{-\frac{1}{2}}, 1) - \bar u(x t^{-\frac{1}{2}}, 1)| \\ && + 
|(t+T)^{\frac{1}{2}}-t^{\frac{1}{2}}| |\bar u(xt^{-\frac{1}{2}}, 1)|\\
&\leq & C (t+T)^{\frac{1}{2}}|x| |(t+T)^{-\frac{1}{2}}-t^{-\frac{1}{2}}| + C |(t+T)^{\frac{1}{2}}-t^{\frac{1}{2}}| |x| t^{-\frac{1}{2}}\\
&\leq & C|x| \left(\left(1+\frac{T}{t}\right)^{\frac{1}{2}}- \left(1-\frac{T}{t}\right)^{\frac{1}{2}}\right)\leq \frac{CT|x|}{t}. 
\end{eqnarray*} Sending $t\to +\infty$, we get the result.

We now show that, if $\bar E_0$ satisfies the exterior and interior $W_{\phi^0}-$condition,   the expanding solution is asymptotic at infinity to the initial cone, 
by comparison with the shrinking Wulff shapes constructed in Remark \ref{wulff}.  

Note that by positive $1$-homogeneity of the function $\bar u$,   for every $R>0$, there exists $K>0$ such that  $\bar E_0$ satisfies  the exterior and interior $RW_{\phi^0}-$condition    at every $(x, \bar u(x))\in \partial \bar E_0$, with $|x|\geq K$. Therefore  we get
\[ \bigcup_{|x|\geq K} (RW_{\phi_0}+y_x)\subseteq \bar E_0\subseteq\overline{\R^{N+1}\setminus \bigcup_{|x|\geq K} (RW_{\phi_0}+y^x) }, \] 
where $y_x, y^x$ are introduced in Definition \ref{ballcondition}.  By comparison (see Remark \ref{wulff} and Theorem \ref{ex})
we have  that  \[ \bigcup_{|x|\geq K} (\underline R(t) W_{\phi_0}+y_x)\subseteq  \bar E_t\subseteq\overline{ \R^{N+1}\setminus \bigcup_{|x|\geq K} (\underline R(t) W_{\phi_0}+y^x)}  \] where $\underline R(t):=\sqrt{R^2-2N\overline{\psi}t} $ 
is defined in Remark \ref{wulff}.    From the previous inclusions, we deduce that, for all $|x|\geq K$ and $t< \frac{R^2}{2N\overline{\psi}}$,  
\[|\bar u(x,t)-\bar u(x)|\leq  C(R-\underline{R}(t))\leq C \frac{N\overline{\psi}t}{R}\] where $C$ is a constant which depends on $\phi$ and on the Lipschitz constant of $\bar u$. This implies the conclusion, sending $R\to +\infty$. 
 
A similar argument can be used to prove the same result when $\bar u\in C^1(\R^N\setminus\{0\})$. For it we define  $v_{\lambda}(x):=\bar u(\lambda e+x)-\bar u(\lambda e)$ where $\lambda>0$ and $e\in \mathbb{R}^N$ with $|e|=1$.
For every compact set $K$, since $\nabla \bar u$ is  continuous in $\mathbb{R}^N\setminus \{0\}$, we get that as $\lambda\to +\infty$
\[
\nabla v_{\lambda}(x)=\nabla \bar u(\lambda e+x) =\nabla \bar u\left(e+\frac{x}{\lambda}\right)\rightarrow \nabla \bar u(e)\]  uniformly in $x$ and $e$.
 Hence, letting $f_e(x):=\nabla \bar u(e) \cdot x$ we conclude that  
$$
\lim_{\lambda \rightarrow \infty}\|v_{\lambda}-f_e\|_{C^1(K)}=0
$$
uniformly in $e$, with $|e|=1$. 

This implies that there are functions $\alpha(r), \varepsilon(r)>0$, $r>0$, with $\alpha(r)\rightarrow \infty$ and $\varepsilon(r)\rightarrow 0$ as $r\rightarrow \infty$ so that for large  $|x|$, there exist $y_x, y^x\in \R^{N+1}$ such that  
\[d((x, \bar u(x)),\alpha(|x|)W_{\phi_0}+y_x), d((x, \bar u(x)),\alpha(|x|)W_{\phi_0}+y^x)\leq \varepsilon(|x|),\] 
and eventually
\[(\alpha(|x|)W_{\phi_0}+y_x)\subseteq \bar E_0\subseteq\overline{\R^{N+1}\setminus  (\alpha(|x|)W_{\phi_0}+y^x) }. \]  
So, the thesis follows from the same argument as above.
\end{proof} 
 
\begin{remark}\upshape
\label{remregu} 
If we assume that $\nabla^2\phi, \psi$ belong to $C^{0,\beta}(\R^{N+1}\setminus \{0\})$ for some $\beta\in (0,1)$, then there exists 
 $C>0$ such that   
 \begin{equation}\label{claim11} \sup_{x\in\R^N} |\div(\nabla_x\phi(-\nabla \bar u(x,t),1))|\leq C t^{-\frac{1}{2}}\qquad \forall t>0.
 \end{equation}    
 Indeed  due to  \eqref{curv}   it is sufficient  to check  that there exists $C>0$ such that 
  \[ \sup_{x\in\R^N} |\div(\nabla_x\phi(-\nabla \bar u(x,1),1))|\leq C,\] 
and this  is a consequence of Proposition \ref{reg}. 

Moreover, if \eqref{claim11} is satisfied, then $|\bar u_t(x,t)|\leq C't^{-\frac{1}{2}}$, for some $C'>0$ depending on $C$ 
and on the Lipschitz norm of $u_0$. Integrating in $t$ we get 
\[|\bar u(x, t+T)-\bar u(x,t)|\leq C' (\sqrt{t+T}-\sqrt{t}) \qquad\text{for all $T>0$,}
\]
hence,  for every $T>0$  we conclude that
\[\lim_{t\to +\infty} |\bar u(x,t+T)-\bar u(x,t)|=0\qquad \text{   uniformly in $\R^N$. }\]
\end{remark} 

On the other hand, every homothetically expanding solution $E_t$ to \eqref{mcf} which is the subgraph of a Lipschitz continuous function has a backward in time extension which starts from a subgraph of a suitable Lipschitz and 1-homogeneous function. 
  
 \begin{proposition} \label{prop_backward}
 Assume that $E_1$ is a solution to \eqref{exp} for some $c>0$, such that there exists  a Lipschitz continuous function  $u_1:\mathbb{R}^N\rightarrow \mathbb{R}$  for which
 $E_1=\{(x,z)\ |\ z\leq u_1(x)\}$.  Let $u(x,t)$ be the solution to  \eqref{pdelevel1} in $\R^N\times (1, +\infty)$ with initial datum $u(x,1)=u_1(x)$. 
 Then    we may extend continuously  $u(x,t)$ in  $\R^N\times (t_0, +\infty)$, where  $t_0=1-\frac{1}{2c}$, and moreover it holds 
 $$
 \lim_{t\rightarrow t_0^+}u(x,t)=\bar u(x)\qquad \text{locally uniformly }
 $$
where $\bar u: \mathbb{R}^N\rightarrow \mathbb{R}$ satisfies  \eqref{hom}.
  \end{proposition}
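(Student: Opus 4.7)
The plan is to write down the backward extension of $u$ by self-similar rescaling and then use the continuous dependence of the forward flow on its initial datum, together with time-translation invariance, to identify a unique continuous limit at $t_0$.

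First I would set $\lambda(t) := \sqrt{2c(t - t_0)}$, so that $\lambda(1) = 1$ and $\lambda\lambda' \equiv c$, and define
\[
u(x,t) := \lambda(t)\, u_1\!\left(\frac{x}{\lambda(t)}\right), \qquad t > t_0,
\]
whose subgraph is $E_t := \lambda(t) E_1$ and which agrees with $u_1$ at $t=1$. Using the scaling identity $H_\phi(\lambda p, \lambda E_1) = \lambda^{-1} H_\phi(p, E_1)$ together with \eqref{exp}, a direct computation shows that $E_t$ is a solution of \eqref{mcf} on $(t_0,+\infty)$, hence $u$ is a solution of \eqref{pdelevel1} there; the uniqueness part of Theorem~\ref{ex} then forces it to coincide with the given forward flow on $[1,+\infty)$. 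Moreover, $u(\cdot,t)$ is $L$-Lipschitz in $x$ uniformly in $t > t_0$, where $L$ is the Lipschitz constant of $u_1$.

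By Arzel\`a--Ascoli, any sequence $t_n \downarrow t_0$ admits a subsequence along which $u(\cdot, t_n) \to \bar u$ locally uniformly, for some $L$-Lipschitz $\bar u$. To show that $\bar u$ does not depend on the subsequence, I would consider the forward flow $\hat u$ issuing from $\bar u$ at time $t_0$. The comparison principle of Theorem~\ref{ex} gives continuous dependence on the initial datum, while the H\"older-in-time estimate $|U(p,t) - U(p,s)| \leq C'\sqrt{|t-s|}$, combined with time-translation invariance, gives continuous dependence on the starting time. Since the flow starting from $u(\cdot, t_n)$ at time $t_n$ coincides with $u(\cdot,t)$ for $t \geq t_n$ by uniqueness, passing to the limit $n \to \infty$ yields $\hat u(\cdot,t) = u(\cdot,t)$ for every $t > t_0$, and the continuity of $\hat u$ at $t_0$ then forces $u(\cdot,t) \to \bar u$ locally uniformly, independently of the chosen subsequence.

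Finally, the $1$-homogeneity of $\bar u$ is built into the self-similar formula: for any $\alpha > 0$, setting $s := t_0 + (t-t_0)/\alpha^2$ one has $\lambda(s) = \lambda(t)/\alpha$ and therefore $u(\alpha x, t) = \alpha\, u(x, s)$. Letting $t \downarrow t_0$ gives $\bar u(\alpha x) = \alpha \bar u(x)$, and together with the Lipschitz bound this is exactly \eqref{hom}. The main obstacle in this plan is precisely the identification of the trace at $t_0$: the self-similar formula by itself does not yield a Cauchy-in-$t$ estimate for $u(\cdot,t)$ as $t \to t_0^+$, so one must genuinely invoke the forward Cauchy theory from Theorem~\ref{ex} to exclude subsequence-dependent limits.
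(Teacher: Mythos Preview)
Your argument is correct and matches the paper's proof essentially step for step: both write the extension as $u(x,t)=\lambda(t)\,u_1(x/\lambda(t))$, extract subsequential limits at $t_0$ via Arzel\`a--Ascoli, and then identify the limit uniquely by invoking the forward Cauchy theory (the paper shifts time and appeals to stability of solutions under locally uniform convergence, which is exactly your ``continuous dependence on initial datum and starting time''). One small terminological point: the comparison principle alone gives $L^\infty$ contraction, not stability under \emph{locally} uniform convergence; what you actually need (and what the paper uses) is stability of viscosity/distributional solutions, which follows from the equi-Lipschitz and equi-H\"older bounds of Corollary~\ref{corex} together with uniqueness.
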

  
 \begin{proof}
Since $E_1$ solves \eqref{exp}, we have that 
\[
 E_t=\lambda(t)E_1, \quad\text{where } \lambda(t):=(2c(t-1)+1)^{\frac{1}{2}}, \quad t\ge 1. 
\] Therefore the solution $u$ to \eqref{pdelevel1} in $\R^N\times (1, +\infty)$ with initial datum $u(x,1)=u_1(x)$ is given by 
\begin{equation}\label{cono_new}
 u(x,t)= \lambda(t)u_1\left(\frac{x}{\lambda(t)}\right), \qquad  t>1, \ x \in \mathbb{R}^N. 
 \end{equation}
By  definition  $\lambda(t)$ is well defined for every $t>t_0=1-\frac{1}{2c}$, and so $u$  can be extended to a continuous function in $\R^N\times (t_0, +\infty)$, which is Lipschitz continuous in $x$, with the same Lipschitz constant as $u_1$. 
 We compute now the limit as $t\to t_0^+$, that is the limit as $\lambda(t)\to 0^+$ in \eqref{cono_new}. For $r\in (0,1)$, we define  \[v_r(x):=ru_1\left(\frac{x}{r}\right).\] Note that $(v_r)_r$  are equi-Lipschitz and locally bounded so that, up to subsequences, there exists the locally uniform limit of $v_r$  as $r\to 0^+$ by Arzel\`a-Ascoli Theorem. 
 
 In order to conclude, we need to show  that such limit   is unique,  i.e., it does not depend on the subsequence.  If this is true, it is easy to check that 
 $\bar u(x):=\lim_{r\rightarrow 0^+}ru_1\left(\frac{x}{r}\right)$ satisfies \eqref{hom}. 
 To prove the whole convergence, we observe that  by   \eqref{cono_new} 
 $$
 v_r(x)= u(x, \lambda^{-1}(r))\qquad \forall r>0.
 $$
 Let   $r_n\rightarrow 0^+$ so that  $\lim_{n\rightarrow \infty}v_{ r_n}(x)=:\bar v(x)$ locally uniformly. We set 
 $$
 u_n(x,t):=u(x, t+\lambda^{-1}(r_n)).
 $$
Then $u_n$ is the solution to \eqref{pdelevel1} in $\R^N\times(0, +\infty)$  with initial datum $u_n(x,0)=u(x,\lambda^{-1}(r_n))=v_{ r_n}(x)$. Recalling that $v_{ r_n}(x)\to \bar v(x)$ locally uniformly, and that $v_{r_n}$ and $\bar v$ are equi-Lipschitz functions, by Corollary \ref{corex}, we get that, up to subsequences, $u_n(x,t)\to \bar v(x,t)$ locally uniformly in $(x,t)$, for some function $\bar v(x,t)$. By stability of solutions to \eqref{pdelevel1}  with respect to uniform convergence, we get that   $\bar v(x,t)$ is the solution to \eqref{pdelevel1} in $R^N\times(0, +\infty)$  with initial datum $\bar v(x,0)=\bar v(x)$.  Now observe that by definition $u_n(x,t)= u(x, t+\lambda^{-1}(r_n))\to u(x, t+t_0)$ for every $t>0$, therefore $\bar v(x,t)=u(x, t+t_0)$ for every $t>0$. This implies that the limit $\bar v$ is   independent of the subsequence $r_n$. 
\end{proof}

We now provide the locally  uniform convergence  to self-similar expanding solutions,  in the rescaled setting \eqref{resc},   
if the initial Lipschitz graph  is a sublinear perturbation of  a cone.  
A similar  result has been obtained  in \cite{eh} for  the isotropic case,  and in \cite{cn} for the fractional mean curvature flow. 
Note that, if the flow $E_t$ is  the subgraph of  the solution $u(x,t)$ to \eqref{pdelevel1}, then the rescaled flow $\tilde E_\tau$ defined in \eqref{resc}
is the   subgraph of   the rescaled function
\begin{equation}\label{rescf} \tilde u(y, \tau):= e^{-\tau}u\left(ye^\tau, \frac{e^{2\tau}-1}{2}\right).\end{equation} 

 \begin{theorem}\label{convthm2}  
   Let $u_0$ be a Lipschitz continuous function,  such that there exist  $\bar u$ which satisfies \eqref{hom}, and constants $K>0$, $\delta\in (0,1)$  for which 
 there    holds\[ |u_0(x)- \bar u(x)|\leq K(1+|x|)^{1-\delta}. \] Let $\tilde u(y, \tau)$  be the rescaled function  as defined in \eqref{rescf}, where $u$ is the solution to 
 \eqref{pdelevel1} with initial data $u_0$ and let $\bar u (x,t)$ be the solution to \eqref{pdelevel1} with initial datum $\bar u$. 
 Then 
\[\lim_{\tau\to +\infty} \tilde u(y, \tau)=\bar u\left(y, \frac{1}{2}\right) \qquad \text{locally uniformly}.\]
 \end{theorem}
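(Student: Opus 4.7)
The plan is to exploit the parabolic scaling invariance of \eqref{pdelevel1}. Because $\phi$ and $\psi$ are positively $1$-homogeneous, for each $\lambda>0$ the function
\[u_\lambda(x,t):=\frac{1}{\lambda}\,u(\lambda x,\lambda^2 t)\]
is again a solution of \eqref{pdelevel1}. A short change of variables yields the identity
\[\tilde u(y,\tau)=u_{e^\tau}\!\left(y,\tfrac{1}{2}-\tfrac{1}{2}e^{-2\tau}\right),\]
so I would reduce the theorem to proving $u_\lambda\to \bar u(\cdot,\cdot)$ locally uniformly on $\R^N\times[0,+\infty)$ as $\lambda\to+\infty$, where $\bar u(\cdot,\cdot)$ is the solution of \eqref{pdelevel1} with initial datum the cone $\bar u$, and then evaluate along $\lambda=e^\tau\to\infty$ at a time approaching $1/2$.

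First, I would check convergence of the rescaled initial data. Using that $\bar u$ is positively $1$-homogeneous,
\[|u_\lambda(x,0)-\bar u(x)|=\frac{1}{\lambda}\,|u_0(\lambda x)-\bar u(\lambda x)|\leq\frac{K}{\lambda}(1+\lambda|x|)^{1-\delta},\]
and the right-hand side tends to $0$ as $\lambda\to+\infty$ uniformly on every bounded subset of $\R^N$; this is precisely where the sublinear decay assumption $\delta>0$ enters.

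Next, I would run a compactness/stability argument. By Corollary \ref{corex}, each $u_\lambda(\cdot,t)$ is Lipschitz in $x$ with Lipschitz constant bounded by $\|\nabla u_0\|_\infty$ (which is preserved by the parabolic rescaling), and $u_\lambda(x,\cdot)$ is $\tfrac{1}{2}$-H\"older in $t$ with a constant depending only on $\|\nabla u_0\|_\infty$. Hence $\{u_\lambda\}$ is equicontinuous on every $\R^N\times[0,T]$, and Arzelà--Ascoli produces a locally uniform limit $v$ along some sequence $\lambda_n\to\infty$. Stability of viscosity solutions under locally uniform convergence (and, in the case \eqref{psinorm}, the distributional stability proved in \cite{cmnp2}) ensures that $v$ solves \eqref{pdelevel1} with initial datum $\bar u$, and the uniqueness statement in Theorem \ref{ex} forces $v=\bar u(\cdot,\cdot)$. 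Since the limit is independent of the subsequence, the whole family converges.

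To close the argument, using the identity displayed above together with the $\tfrac{1}{2}$-H\"older estimate in time,
\[|\tilde u(y,\tau)-u_{e^\tau}(y,1/2)|\leq C'\,e^{-\tau},\]
while $u_{e^\tau}(y,1/2)\to\bar u(y,1/2)$ locally uniformly by the previous step. I expect the main technical obstacle to be the stability/uniqueness step in the nonsmooth case \eqref{psinorm}, where one must work in the distributional level-set framework via the approximation of \cite{cmnp2}; the regular case is standard. The rest of the proof is essentially the observation that the sublinear decay hypothesis turns the parabolic rescaling into a genuine local-uniform convergence of the initial data.
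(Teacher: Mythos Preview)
Your argument is correct and takes a different, more conceptual route than the paper's. Rather than blowing down the whole solution and appealing to Arzel\`a--Ascoli, the paper proceeds purely by comparison: for each $r>1$ it introduces a cutoff $\chi$ and the interpolated datum $u_0^r(x)=\bar u(x)+\chi(|x|/r)\bigl(u_0(x)-\bar u(x)\bigr)$, then sandwiches $u_0$ between $u_0^r\pm K(1+2r)^{1-\delta}$ and in turn $u_0^r$ between the genuine cones $\bar u(x)\pm \tfrac{2K}{r^\delta}|x|$. Applying the comparison principle and the self-similarity \eqref{cono} to the cone solutions $\bar u_{\pm r}$ yields the explicit two-sided bound
\[
\bar u_{-r}\!\left(y,\tfrac12\right)-\bigl(B+K(1+2r)^{1-\delta}\bigr)e^{-\tau}\le \tilde u(y,\tau)\le \bar u_{+r}\!\left(y,\tfrac12\right)+\bigl(B+K(1+2r)^{1-\delta}\bigr)e^{-\tau},
\]
and one then sets $r=e^\tau$ so that the additive error decays like $e^{-\delta\tau}$. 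The closing step in the paper---that $\bar u_{\pm r}(\cdot,\tfrac12)\to\bar u(\cdot,\tfrac12)$ as $r\to\infty$---is exactly the stability principle you invoke, only applied to a one-parameter family of cone data rather than to the full blow-down family $u_\lambda$. Thus both proofs ultimately rest on stability of \eqref{pdelevel1} under locally uniform convergence of uniformly Lipschitz initial data; the paper's barrier chain avoids compactness and produces a quantitative two-sided estimate along the way, while your approach is shorter and makes transparent that the result is a direct consequence of the parabolic scaling identity $\tilde u(y,\tau)=u_{e^\tau}\bigl(y,\tfrac12-\tfrac12 e^{-2\tau}\bigr)$ together with the uniqueness and stability encoded in Theorem~\ref{ex} and Corollary~\ref{corex}.
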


\begin{proof} 
We argue as in the proof of \cite[Theorem 5.1]{cn}.   Let $\chi:(0, +\infty)\to (0, +\infty)$ be a smooth function  such that $\chi(k)\equiv 0$ if $k<1$ and $\chi(k)\equiv 1$ if $k>2$.  Define  for $r>1$
\[u_0^r(x)=\bar u(x)+ \chi\left(\frac{|x|}{r}\right)(u_0(x)-\bar  u(x)) . \] 
Then our assumption implies that 
\[|u_0(x)- u_0^r(x)| \leq K(1+2r)^{1-\delta}\qquad \forall x\in \R^N. \] 
By the comparison principle we deduce that 
\[u_r(x,t)-K(1+2r)^{1-\delta}\leq u(x,t)\leq u_r(x,t)+ K(1+2r)^{1-\delta}\qquad \forall x\in \R^N, t>0 \]  where $u, u_r$ are the solutions to \eqref{pdelevel1} respectively with initial data $u_0$ and $u_0^r$. Then passing to the rescaled functions $\tilde u$ and $\tilde u^r$ as defined in \eqref{rescf},  we obtain 
\begin{equation}\label{r1}  \tilde u^r(y,\tau)-K(1+2r)^{1-\delta}e^{-\tau} \leq \tilde u(y, \tau)\leq \tilde u^r(y,\tau)+ K(1+2r)^{1-\delta}e^{-\tau}\qquad \forall y\in \R^N, \tau>0.
\end{equation} 
On the other hand,  
\[\bar u(x)- \frac{2K}{ r^\delta}  |x|\leq u_0^r(x)\leq \bar u(x)+ \frac{2K}{r^\delta} |x|.\]   
Let $\bar u_{\pm r}$ be the solution to \eqref{pdelevel1}  with initial datum respectively $\bar u(x)\pm \frac{2K}{ r^\delta}  |x|$. 
By the comparison principle \begin{equation}\label{scale}  \bar u_{-r}(x,t)\leq u^r(x,t)\leq \bar u_{+r}(x,t)\qquad\forall x\in\R^N, t>0. 
\end{equation} 
Note that $\bar u(x)\pm \frac{2K}{ r^\delta}  |x|$ satisfy \eqref{hom} with Lipschitz constant  $\|D\bar u\|_\infty+ \frac{2K}{ r^\delta}\leq \|D\bar u\|_\infty+2K$.  
By    the rescaling properties of  $\bar u_{\pm r}$, see \eqref{cono}, we get that  when we apply the rescaling \eqref{rescf} to $\bar u_{\pm r}$ we obtain 
$\bar u_{\pm r} \left(y, \frac{1-e^{-2\tau}}{2}\right)$.
Then, passing in \eqref{scale} to the rescaled functions as defined in \eqref{rescf} we get 
\[\bar u_{-r} \left(y, \frac{1-e^{-2\tau}}{2}\right)\leq \tilde u^r(y, \tau) \leq\bar u_{+r} \left(y, \frac{1-e^{-2\tau}}{2}\right). \] 
 Recalling that $\bar u(x)\pm \frac{2K}{ r^\delta}  |x|$ are Lipschitz functions with Lipschitz constant bounded by  $ \|D\bar u\|_\infty+2K$, 
 by Corollary  \ref{corex}
 we get that there exists $B$ depending only on $\|D\bar u\|_\infty$ and $K$ such that 
\[\bar u_{-r}\left(y, \frac{1}{2} \right)-Be^{-\tau} \leq \tilde u^r(y, \tau) \leq \bar u_{+r}\left(y, \frac{1}{2} \right)+Be^{-\tau}. \] 
Therefore by \eqref{r1} we conclude that
\[  \bar u_{-r}\left(y, \frac{1}{2} \right)-(B+K(1+2r)^{1-\delta})e^{-\tau} \leq \tilde u(y, \tau)\leq \bar u_{+r}\left(y, \frac{1}{2} \right)+(B+K(1+2r)^{1-\delta})e^{-\tau}, \] 
 for all $y\in \R^N$, $\tau>0$ and $r>1$.
 
Notice that $\bar u_{\pm r}\left(y, \frac{1}{2} \right)\to  \bar u\left(y, \frac{1}{2} \right)$ 
as $r\to +\infty$,  
locally uniformly in $y$ by stability of solutions with respect to local uniform convergence, since $\bar u(x)\pm \frac{2K}{r^\delta} |x|\to \bar u(x)$ locally uniformly. 
Therefore, taking $r=e^\tau$ in the previous inequality and letting $\tau\to +\infty$, we obtain the local uniform convergence of $\tilde u$. 
\end{proof}  

 \begin{remark}\upshape 
 If $u_0-\bar u\in L^\infty(\R^N)$, the convergence result in Theorem \ref{convthm2} can  be strengthened to  uniform convergence. 
 In the isotropic case, the uniform convergence has been obtained in \cite{eh} under the assumptions of Theorem \ref{convthm2}, by using maximum principle and integral estimates. 
 \end{remark}

\section{Stability of self-similar solutions asymptotic to mean convex cones}\label{sectioncone} 
 In this section we assume that the anisotropy is regular, that is, \eqref{phiregular} holds, and we address the issue of the stability with respect to perturbations vanishing at infinity in the case of mean convex cones.   The same problem  has been considered in the isotropic setting in \cite{clutt}. 
 
So we consider self-similar expanding solutions starting from initial data which satisfy the following condition: $\bar u$ is as in  \eqref{hom} and  moreover
\begin{equation}\label{convex} \bar u\in C^2(\R^{N}\setminus \{0\}),  \text{  $\bar u$ is nonlinear  and  } \div(\nabla_x\phi(-\nabla \bar u,1))< 0\text{ in the viscosity sense}.
\end{equation} 
Note that the assumption in \eqref{convex} implies  that the epigraph of $\bar u$, that is, the set $\{(x,z)\ |\ z\geq \bar u(x)\}$ is a mean convex set. 
 
We first show that mean convexity is preserved and that the homothetic solution $\bar u(x,t)$ always lies above $\bar u$. 
\begin{lemma} \label{lemmapos} Let $\bar u$ be as in \eqref{hom} and  \eqref{convex} and let $\bar u(x,t)$ be the solution to \eqref{pdelevel1} with initial datum $\bar u$. Then 
$\bar u(x,t+s)\geq \bar u(x,t)$ for every $t\geq 0$ and $s>0$, and moreover $\bar u(0,t)=\sqrt{t}\bar u(0,1)>0$ for $t>0$.  \end{lemma}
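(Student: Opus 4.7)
My plan is to split the lemma into three pieces: realize $\bar u$ as a strict viscosity subsolution of \eqref{pdelevel1}, use comparison and the rescaling identity to get both monotonicity and the non-strict bound $\bar u(0,1)\geq 0$, and finally upgrade this to strict positivity via a strong-maximum-principle / barrier argument.

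The first two pieces are routine. Hypothesis \eqref{convex} says that $\bar u$, regarded as a function constant in $t$, is a strict viscosity subsolution of \eqref{pdelevel1}, since $(\bar u)_t \equiv 0$ and $\psi(-\nabla \bar u,1)\,\div(\nabla_x\phi(-\nabla \bar u,1))<0$ in the viscosity sense (using $\psi>0$). The comparison principle from Theorem~\ref{ex}/Corollary~\ref{corex} therefore yields $\bar u(x,t)\geq \bar u(x)$ for every $(x,t)$. Evaluating this at an auxiliary time $s>0$ gives $\bar u(\cdot,s)\geq \bar u(\cdot,0)$, and since by $t$-translation invariance and uniqueness the map $t\mapsto \bar u(\cdot,t+s)$ is the flow issuing from the Lipschitz datum $\bar u(\cdot,s)$, a second application of comparison delivers the monotonicity $\bar u(x,t+s)\geq \bar u(x,t)$. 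The self-similar identity \eqref{cono} in Theorem~\ref{selfsimilar} gives at once $\bar u(0,t)=\sqrt{t}\,\bar u(0,1)$, and evaluating the lower bound just obtained at $x=0$ (with $\bar u(0)=0$ by $1$-homogeneity) yields $\bar u(0,1)\geq 0$.

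For the strict inequality $\bar u(0,1)>0$, I would use Proposition~\ref{reg} — which gives $\bar u(\cdot,t)\in C^{2,\alpha}(\R^N)$ for $t>0$ and hence that \eqref{pdelevel1} holds classically — to show that $w(x,t):=\bar u(x,t)-\bar u(x)\geq 0$ satisfies, on the open set $(\R^N\setminus\{0\})\times(0,\infty)$, a uniformly parabolic linear inequality
\[ w_t - a_{ij}(x,t)\,\partial_{ij}w - b_i(x,t)\,\partial_i w \geq -\psi(-\nabla \bar u(x),1)\,\div(\nabla_x\phi(-\nabla \bar u(x),1)) > 0, \]
with bounded coefficients, obtained by linearising the quasilinear operator via the mean value theorem between $\bar u(\cdot,t)$ and $\bar u$ (uniform parabolicity from \eqref{phiregular}, bounded $a_{ij},b_i$ from the uniform Lipschitz bound on $\bar u(\cdot,t)$). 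The strong maximum principle then forces $w>0$ on $(\R^N\setminus\{0\})\times(0,\infty)$, and the self-similar scaling $w(x,t)=\sqrt{t}\,w(x/\sqrt{t},1)$ (a consequence of \eqref{cono} and the $1$-homogeneity of $\bar u$) shows that strict positivity at one off-axis point at time $1$ is equivalent to strict positivity on the whole off-axis part of each time slice. To transfer this to the singular ray $x=0$, I would compare from below with a simpler $1$-homogeneous Lipschitz mean-convex cone $v\leq \bar u$ — available because \eqref{convex} requires $\bar u$ to be nonlinear, so a strictly smaller mean-convex cone fits below its graph — whose self-similar expanding flow $v(0,t)=\sqrt{t}\,v(0,1)$ can be shown strictly positive by an explicit Wulff-shape computation, giving $\bar u(0,t)\geq v(0,t)>0$ by comparison.

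The main obstacle is precisely this last step. Because $\bar u$ is only Lipschitz at the cone tip, the strong maximum principle is unavailable at $(0,1)$, and the strict subsolution gap on the right-hand side of the parabolic inequality degenerates to zero as $x\to 0$. Thus strict positivity at the origin cannot be harvested from a bulk parabolic estimate on $w$ alone: it requires either the explicit sub-barrier just sketched, or an indirect contradiction via the $C^{2,\alpha}$ Taylor expansion of $\bar u(\cdot,1)$ at the would-be minimum $x=0$ combined with the identity $\bar u_t(0,1)=\tfrac12 \bar u(0,1)$ coming from self-similarity and with the nonlinearity of $\bar u$.
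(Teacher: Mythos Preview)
Your derivation of the monotonicity $\bar u(\cdot,t+s)\geq\bar u(\cdot,t)$ and of $\bar u(0,t)=\sqrt{t}\,\bar u(0,1)\geq 0$ agrees with the paper's. The difficulty is the strict inequality, and there your argument has two real gaps. First, Proposition~\ref{reg} requires $\nabla^2\phi,\psi\in C^{0,\beta}$, which is \emph{not} among the hypotheses in force in Section~\ref{sectioncone} (only \eqref{phiregular} is assumed); hence the $C^{2,\alpha}$ regularity you invoke, and with it the classical parabolic linearisation on $(\R^N\setminus\{0\})\times(0,\infty)$, is unavailable. Second, your transfer of strict positivity to $x=0$ via ``a simpler mean-convex cone $v\leq\bar u$ with $v(0,1)>0$ by an explicit Wulff-shape computation'' merely restates the same problem for $v$; no such computation is supplied, and there is no reason it should be easier. (A side remark: the forcing term $-\psi\,\div(\nabla_x\phi(-\nabla\bar u,1))$ is $(-1)$-homogeneous in $x$, so it \emph{blows up} rather than degenerates as $x\to 0$.)

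The paper's route is shorter and works under \eqref{phiregular} alone. Since $\bar u(\cdot,1)$ solves the expander equation \eqref{exp} with $c=\tfrac12$, one reads off $\div(\nabla_x\phi(-\nabla\bar u(\cdot,1),1))\in L^\infty_{\mathrm{loc}}$, and elliptic regularity gives $\bar u(\cdot,1)\in C^{1,\alpha}(\R^N)$ for every $\alpha<1$. If $\bar u(0,1)=0$, the $C^1$ tangent plane $p_0\cdot x$ of $\bar u(\cdot,1)$ at the origin satisfies $p_0\cdot x\geq\bar u(x)$ (combine $\bar u\leq\bar u(\cdot,1)$ with the $1$-homogeneity of $\bar u$), so the smooth linear function $x\mapsto p_0\cdot x$ touches $\bar u$ from above at $0$; testing the strict viscosity inequality in \eqref{convex} against it yields $0<0$. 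This is essentially the ``indirect contradiction via nonlinearity'' you allude to at the end, but carried out with $C^{1,\alpha}$ regularity coming from the elliptic expander equation, without any parabolic machinery.
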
 
\begin{proof} Condition \eqref{convex} implies that $\bar u(x)$ is a stationary subsolution to \eqref{pdelevel1}, so by comparison $\bar u(x,t)\geq \bar u(x)$. Again by comparison and the semigroup property we get that for all $s>0$, there holds $\bar u(x, t+s)\geq \bar u(x,t)$.

In particular we get that $\bar u(0,t)\geq \bar u(0)=0$, moreover by \eqref{cono}, $\bar u(0,t)=\sqrt{t} \bar u(0,1)$. Observe that since $\bar u(0,1)$ 
is a solution to \eqref{exp} with $c=1/2$, then $ \div(\nabla_x\phi(-\nabla \bar u,1))\in L^\infty_{loc}$ and by elliptic regularity theory \cite{tru}, recalling \eqref{phiregular}, 
this implies that  $\bar u(\cdot ,1)\in C^{1,\alpha}(\R^N)$  for every $\alpha<1$. It follows that $\bar u(0,1)>0$. 
\end{proof} 

 \begin{proposition}\label{lemmabarr}  Let $\bar u$ be as in \eqref{hom} and  \eqref{convex} and let $u_0:\R^N\to \R$ be a Lipschitz continuous function such that 
  \[   \lim_{|x|\to +\infty} u_0(x)-\bar u(x)=0. \] 
  Then for every $\delta>0$ there exists $t_\delta>0$ such that
  \[u(x,t)\geq \bar u(x)-\delta, \qquad t\geq t_\delta\] where $u(x,t)$ is the viscosity solution to \eqref{pdelevel1} with initial datum $u_0$. 
  \end{proposition}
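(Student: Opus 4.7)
The plan is to reduce the statement to an instantaneous bound at some time $T(\eta)$, and then to construct a subsolution of \eqref{pdelevel1} below $u_0$ whose flow can be controlled in two regimes: on bounded sets via the rescaled-convergence Theorem \ref{convthm2} together with the strict positivity $\bar u(0,1/2)>0$ from Lemma \ref{lemmapos}, and at infinity via the shrinking Wulff-shape comparison of Remark \ref{wulff}, mirroring the argument used in the last part of the proof of Theorem \ref{selfsimilar}.

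\textbf{Reduction step.} Since $\bar u$ satisfies $\div(\nabla_x\phi(-\nabla\bar u,1))\leq 0$ in the viscosity sense by \eqref{convex}, the shifted cone $\bar u-\eta$ is a stationary subsolution of \eqref{pdelevel1} for every $\eta>0$. The comparison principle in Theorem \ref{ex} then shows that once $u(\cdot,T)\geq \bar u - \eta$ holds at some $T$, the inequality is preserved for all $t\geq T$. It thus suffices to find, for each $\eta<\delta$, some $T(\eta)>0$ with $u(\cdot,T(\eta))\geq \bar u - \eta$ on $\R^N$.

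\textbf{Barrier and its two-regime control.} Fix $\eta>0$, pick $R>0$ with $u_0\geq \bar u - \eta/2$ outside $B_R$, and set $w_0:=\min(u_0,\, \bar u - \eta/2)$. Then $w_0$ is Lipschitz, $w_0\leq u_0$, $w_0=\bar u - \eta/2$ on $\R^N\setminus B_R$, and $w_0-\bar u$ is globally bounded. Let $w$ be the solution of \eqref{pdelevel1} with initial datum $w_0$; by comparison $w\leq u$. \emph{Outer regime.} For $|x|\geq R$ the barrier $w_0$ coincides with the $C^2$, $1$-homogeneous function $\bar u - \eta/2$, whose Hessian scales as $|x|^{-1}$; under \eqref{phiregular} the subgraph therefore satisfies an interior (and exterior) $R_x W_{\phi^0}$-condition with $R_x\gtrsim |x|$, and Remark \ref{wulff} gives
$$w(x,T)\geq \bar u(x) - \tfrac{\eta}{2} - \tfrac{CT}{|x|}\qquad\text{for }|x|\text{ sufficiently large,}$$
so that $w(\cdot,T)\geq \bar u - \eta$ on $\{|x|\geq R^\ast\}$ as soon as $R^\ast\gtrsim CT/\eta$. \emph{Inner regime.} Since $w_0-\bar u$ is bounded, hence sublinear, Theorem \ref{convthm2} yields $\tilde w(y,\tau)\to \bar u(y,1/2)$ locally uniformly in $y$. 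Unwinding the rescaling \eqref{rescf} gives $w(x,t)\approx \bar u(x,(2t+1)/2)$ on $|x|\leq r_0\sqrt{2t+1}$; combined with the strict increase $\bar u(\cdot,s)>\bar u(\cdot)$ for $s>0$ (which follows from Lemma \ref{lemmapos} and the strong maximum principle provided by \eqref{convex}), $w(x,T)\geq \bar u(x)$ uniformly on $|x|\leq r_0\sqrt{2T+1}$ once $T$ exceeds a threshold depending only on $\bar u$ and on the rate of convergence in Theorem \ref{convthm2}.

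\textbf{Conclusion and main obstacle.} To finish, one selects $(T,R^\ast)$ with $R^\ast\sim CT/\eta$ (outer regime) and $R^\ast\leq r_0\sqrt{2T+1}$ (inner regime); this gives $w(\cdot,T)\geq \bar u-\eta$ on all of $\R^N$, hence $u(\cdot,T)\geq \bar u-\eta$, and the reduction step concludes. The main delicate point is this synchronisation: the Wulff error $CT/|x|$ forces $R^\ast$ to grow \emph{linearly} in $T$, while the inner regime only covers a ball of radius $\sim\sqrt{T}$; the two constraints are compatible provided $T\lesssim r_0^2\eta^2$, so the argument goes through directly when $\eta$ exceeds the convergence threshold in Theorem \ref{convthm2}, and otherwise one iterates, using the weaker bound obtained so far as the starting point of a finer application of the same scheme. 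The quantitative ingredient that ultimately powers both regimes---strict positivity of $\bar u(0,1/2)$ and strict monotonicity in time---comes from the strict mean convexity hypothesis \eqref{convex}.
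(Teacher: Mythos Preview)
Your reduction step is sound, and the overall two-regime strategy is natural, but the synchronisation problem you flag at the end is a genuine gap that you do not actually close. The Wulff-shape comparison in your outer regime produces an error of order $CT/|x|$, so the outer estimate $w(\cdot,T)\geq\bar u-\eta$ only holds for $|x|\gtrsim T/\eta$; meanwhile your inner regime, via Theorem~\ref{convthm2}, covers $|x|\lesssim r_0\sqrt{T}$ with $r_0$ constrained to a fixed compact (since the gap $\min_{|y|\leq r_0}(\bar u(y,1/2)-\bar u(y))$ tends to $0$ as $r_0\to\infty$). The overlap condition $r_0\sqrt{T}\gtrsim T/\eta$ forces $T\lesssim r_0^2\eta^2$, while the inner regime needs $T$ above a threshold $T_0(r_0)$ coming from the \emph{unquantified} locally uniform convergence in Theorem~\ref{convthm2}. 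There is no reason these are compatible, and your proposed iteration does not help: restarting from $u(\cdot,T_1)\geq\bar u-\eta_1$ does not improve the rate in Theorem~\ref{convthm2}, so the same incompatibility recurs at the next step.

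The paper avoids this difficulty entirely by a different choice of outer barrier. Instead of time-decaying Wulff shapes, it perturbs $\bar u$ on a compact set to build a function $b_{r_0}(x)=\bar u(x)-r_0\chi(|x|)$ which, for $r_0$ small, still satisfies $\div(\nabla_x\phi(-\nabla b_{r_0},1))<0$ by the \emph{strict} mean convexity in \eqref{convex}; hence every rescaling $\lambda b_{r_0}(x/\lambda)-\delta$ is a \emph{stationary} subsolution that equals $\bar u-\delta$ for $|x|>2\lambda$. This gives the outer bound for all $t$ simultaneously with no degradation, so there is no upper constraint on $T$. For the inner regime the paper compares directly with $\bar u(\cdot,t)-m$ (the cone evolution shifted down), which by Lemma~\ref{lemmapos} eventually dominates $\bar u$ on any fixed ball; the supremum of these two subsolutions does the job. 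The strict inequality in \eqref{convex} is thus used not for a strong maximum principle, as you suggest, but to ensure the perturbed cone $b_{r_0}$ remains a subsolution.
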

  \begin{proof}The proof is based on the construction of a barrier for the evolution. 
   First of all observe that by assumption, there exists $m>0$ such that $u_0(x)\geq \bar u(x)-m$ for all $x\in\R^N$. Moreover, for every $\delta>0$ there exists  $R(\delta)>0$ such that $u_0(x)\geq \bar u(x)-\delta$ for every $|x|\geq R(\delta)$.
     
 Let  $\chi:[0, +\infty)\to [0, 1]$ be a  smooth function such that $\chi(s)=1$ if $s<1$, $\chi(s)=0$ if $s>2$.  
Then  for every $r>0$, we define $b_r(x):=\bar u(x)- r\chi(|x|)$. Then $b_r\in C^2(\R^N\setminus\{0\})$, moreover $b_r(x)\leq \bar u(x)$ for all $x$, in particular $b_r(x)=\bar u(x)$
 if $|x|>2$ and $b_r(x)=\bar u(x)-r$ if $|x|\leq 1$.  Finally  we get 
\[\lim_{r\to 0}  \max_{|x|\in [1,2]} \div(\nabla_x\phi(-\nabla b_r(x),1))\leq \max_{|x|\in [1,2]} \div(\nabla_x\phi(-\nabla \bar u,1))=-c<0.\]
Choosing $r$ sufficiently small, we get that $\div(\nabla_x\phi(-\nabla b_r(x),1))<0$ in the viscosity sense for all $x$. Now, let us fix $r_0$, and $b_{r_0}$ the corresponding function such that the previous condition is satisfied. In particular $b_{r_0}$ is a stationary subsolution to \eqref{pdelevel1}. 

 We claim that there exists $\lambda>0$ such that
 \begin{equation}\label{claim1} u(x,t)\geq w(x,t):=\sup\left(\bar u(x,t)-m,  \lambda b_{r_0}\left(\frac{x}{\lambda}\right)-\delta \right)  \end{equation} 
where $\bar u(x,t)$ is the solution to \eqref{pdelevel1} with initial datum $\bar u$. Note that  the function
 $w(x,t)$ is a subsolution to \eqref{pdelevel1}, since it is the supremum between two subsolutions. 
 Therefore to check the claim, it is sufficient to show that $u(x,0)=u_0(x)\geq w(x,0)=\sup\left(\bar u(x)-m,   \lambda b_{r_0}\left(\frac{x}{\lambda}\right)-\delta \right)$. First of all, by assumption we know that $u_0(x)\geq \bar u(x)-m$ for all $x$. On the other hand, observe that if $\lambda>\max(R(\delta), m/r_0)$, by definition of $\chi$ and the positive $1$-homogeneity of $\bar u$, there holds that  for $|x|\leq R(\delta)<\lambda$, 
\[\lambda b_{r_0}\left(\frac{x}{\lambda}\right)=\bar u(x)-\lambda r_0 \chi \left(\frac{|x|}{\lambda}\right)=\bar u(x)-\lambda r_0\leq\bar u(x)-m\leq u_0(x).\]  On the other hand if $|x|\geq R(\delta)$, by assumption and our construction of $b_r$,  $u_0(x)\geq \bar u(x)-\delta\geq \lambda b_{r_0}\left(\frac{x}{\lambda}\right)-\delta$. 

Therefore, by comparison, \eqref{claim1} holds for every $\lambda>\max(R(\delta), m/r_0)$. We fix $\lambda_0$ which satisfies this condition. 

Now, observe that by Lipschitz continuity and Lemma \ref{lemmapos}, there holds  $\bar u(x,t)-m\geq \sqrt{t} \bar u(0,1)-\|\nabla \bar u\|_\infty|x| -m$. 
Therefore   there exists $t_\delta$ such that $  \bar u(x,t)-m\geq \bar u(x) $, for all $|x|\leq 2\lambda_0$ and for all $t\geq t_\delta$ and 
then, in turn, by \eqref{claim1}, we get that  $u(x,t)\geq \bar u(x)$ for all $t\geq t_\delta$ and $|x|
\leq 2\lambda_0$. 
On the other hand, if $|x|>2\lambda_0$  there holds by definition that $\lambda_0 b_{r_0}\left(\frac{x}{\lambda_0}\right)-\delta=\bar u(x)-\delta$ and then again by \eqref{claim1},
$u(x,t)\geq \bar u(x)-\delta$ for all $t$, for all $|x|>2\lambda_0$. 
So, we get the conclusion.
\end{proof} 
We conclude with the following stability result. 

 \begin{theorem} \label{sconvex} Assume that \eqref{phiregular} holds.  Let $u_0:\R^N\to \R$ be a Lipschitz continuous function and    $\bar u$ a nonlinear function which satisfies \eqref{hom},  and  \eqref{convex}
 such that  
 \[   \lim_{|x|\to +\infty} u_0(x)-\bar u(x)=0. \] 
Then, 
\[\lim_{t\to +\infty} u(x,t)-\bar u(x,t)=0\qquad\text{locally uniformly in $\R^N$}\] where $u(x,t), \bar u(x,t)$ are the solutions to \eqref{pdelevel1} with initial datum $u_0, \bar u$. 
If moreover $\nabla^2\phi, \psi$ belong to $C^{0,\beta}(\R^{N+1}\setminus \{0\})$ for some $\beta\in (0,1)$,  then the convergence is uniform in $\R^N$. 
\end{theorem}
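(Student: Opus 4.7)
The plan is to sandwich $u(x,t)$ between $\bar u(x,t)\pm\varepsilon(t)$ with $\varepsilon(t)\to 0$: the lower barrier comes from Proposition~\ref{lemmabarr}, and the upper barrier from a time-shifted copy of the self-similar flow $\bar u(\cdot,t)$ itself. In both halves, the closing ingredient is the asymptotic invariance $\bar u(x, t+T) - \bar u(x, t) \to 0$ as $t\to+\infty$ from Theorem~\ref{selfsimilar}. I will repeatedly exploit that \eqref{pdelevel1} depends only on $\nabla u$, so any additive constant or time shift of a solution is again a solution.

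For the lower bound, fix $\delta>0$ and apply Proposition~\ref{lemmabarr} to obtain $t_\delta$ with $u(x, t_\delta)\geq \bar u(x) - \delta$ for every $x\in\R^N$. Because $\bar u(x,s)-\delta$ solves \eqref{pdelevel1} with initial datum $\bar u-\delta$, the comparison principle in Theorem~\ref{ex} gives $u(x, t_\delta+s)\geq \bar u(x, s) - \delta$ for all $s\geq 0$. Theorem~\ref{selfsimilar} applied with $T=t_\delta$ then yields $\bar u(x,s)\geq \bar u(x, t_\delta+s)-\delta$ on every compact for $s$ large, so $u(x,t)\geq \bar u(x,t) - 2\delta$ locally uniformly for $t$ large.

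For the upper bound, the key observation is that one can arrange $u_0(x)\leq \bar u(x,t_0) + \delta$ globally in $x$ by choosing $t_0$ large. Outside a large ball $B_R$, the decay hypothesis $u_0-\bar u\to 0$ combined with the stationary subsolution inequality $\bar u(\cdot, t_0)\geq \bar u$ (Lemma~\ref{lemmapos}) does the job. Inside $B_R$, Lemma~\ref{lemmapos} gives $\bar u(0, t_0) = \sqrt{t_0}\,\bar u(0,1)\to+\infty$ with $\bar u(0,1)>0$, so the Lipschitz bound $\bar u(x, t_0)\geq \sqrt{t_0}\,\bar u(0,1)-\|\nabla\bar u\|_\infty R$ dominates $u_0$ on $B_R$ once $t_0$ is sufficiently large. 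Comparison against the solution $\bar u(x, t+t_0)+\delta$ then gives $u(x,t)\leq \bar u(x, t+t_0) + \delta$, and invoking Theorem~\ref{selfsimilar} once more to absorb the time shift yields $u(x,t)\leq \bar u(x,t) + 2\delta$ locally uniformly for $t$ large. Sending $\delta\to 0$ produces the local uniform convergence.

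For the uniform statement under $\nabla^2\phi,\psi\in C^{0,\beta}(\R^{N+1}\setminus\{0\})$, Proposition~\ref{lemmabarr} already holds uniformly in $x$, and Remark~\ref{remregu} upgrades the locally uniform convergence from Theorem~\ref{selfsimilar} to the uniform estimate $|\bar u(x,t+T)-\bar u(x,t)|\leq C'(\sqrt{t+T}-\sqrt{t})$, so both halves of the sandwich become uniform in $\R^N$. The subtlety I anticipate is the upper barrier construction: the vertical-bump method of Proposition~\ref{lemmabarr} does not invert, since $\bar u + r\chi$ is not a supersolution when $\bar u$ is a strict stationary subsolution. The workaround is to use the self-similar flow itself as the upper barrier, which requires---crucially---the strict positivity $\bar u(0,1)>0$ that Lemma~\ref{lemmapos} extracts from the nonlinearity of $\bar u$ together with the elliptic regularity for \eqref{exp}.
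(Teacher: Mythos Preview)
Your proof is correct and follows essentially the same route as the paper's: the lower bound via Proposition~\ref{lemmabarr} yields $u(x,t)\geq \bar u(x,t-t_\delta)-\delta$, the upper bound is obtained by choosing $t_\eps$ large enough that $u_0\leq \bar u(\cdot,t_\eps)+\eps$ globally (splitting $\R^N$ into a large ball and its complement exactly as you do), giving $u(x,t)\leq \bar u(x,t+t_\eps)+\eps$, and both time shifts are absorbed using Theorem~\ref{selfsimilar} (respectively Remark~\ref{remregu} for the uniform case). Your closing commentary on why the bump construction does not invert and on the role of $\bar u(0,1)>0$ is accurate and matches the paper's implicit reasoning.
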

\begin{proof}
First of all we observe that by  Proposition \ref{lemmabarr} and the comparison principle,  there holds that for all $\delta>0$ there exists $t_\delta$ such that for all $t>t_\delta$, 
\begin{equation}\label{out}u(x,t)\geq \bar u(x, t-t_\delta)-\delta.\end{equation}

We fix now $\eps>0$ and $R_\eps>0$ such that $u_0(x)\leq \bar u(x)+\eps$ for all $|x|>R_\eps$. Therefore, by Lemma \ref{lemmapos},  we get that
for all $t>0$,  \begin{equation}\label{due}u_0(x)\leq  \bar u(x)+\eps\leq \bar u(x,t)+\eps\qquad \text{ for all $|x|>R_\eps$}.\end{equation} 
Observe now that  by \eqref{cono} and Lipschitz continuity \[\bar u(x,t)\geq \bar u(0, t)- \|\nabla\bar u\|_\infty |x|= \sqrt{t}\bar u(0,1)-\|\nabla\bar u\|_\infty |x|.\] 
Since $\bar u(0,1)>0$ by Lemma \ref{lemmapos}, 
 there exists $t_\eps>0$ sufficiently large such that \begin{equation}\label{uno}\bar u(x,t)\geq u_0(x)\qquad\text{ for all $|x|\leq R_\eps$, $t\geq t_\eps$. }\end{equation}

Therefore, by \eqref{due}, \eqref{uno}, and by comparison we get that for every $t>0$
\begin{equation}\label{in}u(x,t)\leq \bar u(x, t+t_\eps)+\eps\qquad \forall t\geq 0, x\in \R^N.\end{equation}

By \eqref{in} and \eqref{out} we get that for $\eps>0$, $\delta>0$ 
\[ \bar u(x, t-t_\delta)-\bar u(x,t)-\delta  \leq u(x,t)-\bar u(x,t)\leq  \bar u(x, t+t_\eps)-\bar u(x,t)+\eps.\]
We conclude sending $t\to +\infty$, and recalling Theorem \ref{selfsimilar}. In case that  $\nabla^2\phi, \psi$ are more regular, we conclude recalling Remark \ref{remregu}. 
\end{proof} 

  \section{Stability of hyperplanes}\label{sectionhyper} 
In this section we   show that hyperplanes are stable with respect to the flow \eqref{mcf},  if the anisotropy is regular, that is, \eqref{phiregular} holds. In particular we show that 
 if the initial datum is flat at infinity, then the solution stabilizes  to the hyperplane  at which the initial datum is asymptotic.  The same result has been obtained in the isotropic case by integral estimates and comparison with large balls in \cites{clutt, e}, and by using the heat kernel, in dimension $2$, in \cite{n}. Here we provide a different proof based on construction of suitable periodic barriers. 
 
  We start with a preliminary lemma on $1$-dimensional periodic barriers.
  
\begin{lemma}\label{lemmaper}
Assume \eqref{phiregular}, and
let $f_0:\R\to \R$ be a   function  of class $C^{1,1}(\R)$ which is
$\Z$-periodic and odd.  Let $u(x,t)$ be the solution to \eqref{pdelevel1} with initial datum $u_0(x):=f_0(x\cdot e_1)$.  Then 
\[\lim_{t\to +\infty} u(x, t) =0 \qquad\text{uniformly in $C(\R^N)$. }\]   
\end{lemma}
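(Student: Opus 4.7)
The plan is to reduce \eqref{pdelevel1} to a one-dimensional quasilinear parabolic equation and then build explicit decaying barriers on a fundamental half-period.

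\emph{Reduction to 1D and symmetries.} Since $u_0(x)=f_0(x\cdot e_1)$ is invariant under translations in the directions $e_2,\ldots,e_N$, uniqueness in Theorem~\ref{ex} forces $u(x,t)=f(x\cdot e_1,t)$ for some continuous $f$. The same uniqueness together with $u_0(x+e_1)=u_0(x)$ gives $f(s+1,t)=f(s,t)$. Moreover, $(x,t)\mapsto -u(-x,t)$ is a symmetry of \eqref{pdelevel1} (a direct check using positive $1$-homogeneity of $\phi$ and $\psi$), and it sends $u_0$ to itself because $f_0$ is odd; uniqueness then yields $f(-s,t)=-f(s,t)$. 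Combining the two, $f(k/2,t)=0$ for every $k\in\Z$ and every $t\geq 0$.

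\emph{Reduced 1D equation.} Substituting $u=f(x_1,t)$ in \eqref{pdelevel1} reduces it, in the viscosity sense, to
\[
f_t=a(f_s)\, f_{ss}, \qquad a(q):=\psi\bigl(-qe_1+e_{N+1}\bigr)\,\phi_{11}\bigl(-qe_1+e_{N+1}\bigr),
\]
where $\phi_{11}=\partial^2\phi/\partial p_1^2$. Under \eqref{phiregular}, positive $1$-homogeneity gives $D^2\phi(p)p=0$, and uniform convexity of $\phi^2$ forces $\ker D^2\phi(p)=\R p$; since $p:=-qe_1+e_{N+1}$ is never a multiple of $e_1$, one has $\phi_{11}(p)>0$ and hence $a(q)>0$.

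\emph{Barrier on a half-period.} Because $f_0\in C^{1,1}(\R)$ vanishes at $0$ and $1/2$, the ratio $f_0(s)/\sin(2\pi s)$ is bounded on $(0,1/2)$, so one may choose $A>0$ with $|f_0(s)|\leq A\sin(2\pi s)$ on $[0,1/2]$. Set $\alpha:=4\pi^{2}\min_{|q|\leq 2\pi A}a(q)>0$ and $b(s,t):=Ae^{-\alpha t}\sin(2\pi s)$. A direct computation yields
\[
b_t-a(b_s)\,b_{ss}=Ae^{-\alpha t}\sin(2\pi s)\,\bigl(4\pi^2 a(b_s)-\alpha\bigr)\geq 0\quad\text{on }[0,1/2]\times[0,\infty),
\]
so $b$ is a smooth classical supersolution of the 1D equation; furthermore $b\geq f$ on the parabolic boundary of the strip (both vanish at $s=0,1/2$ and $b(\cdot,0)\geq f_0$ by construction). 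The standard comparison principle for viscosity solutions of uniformly parabolic equations on a bounded interval with Dirichlet data gives $f\leq b$, and the symmetric argument with $-b$ in place of $b$ gives $-b\leq f$. Hence $|f(s,t)|\leq Ae^{-\alpha t}$ on $[0,1/2]$, and by $\Z$-periodicity and oddness this estimate extends to all $s\in\R$, so $\|u(\cdot,t)\|_{C(\R^N)}\to 0$ exponentially as $t\to\infty$.

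\emph{Main obstacle.} The subtle point is the need to compare on the bounded interval $[0,1/2]$ rather than on all of $\R^N$ as in Theorem~\ref{ex}: the sine barrier cannot be promoted to a global supersolution, because its natural periodic extension flips to a subsolution on every half-period where $\sin(2\pi s)<0$. One therefore has to genuinely use the half-integer zero set of $f$ as Dirichlet walls, invoking the bounded-domain comparison for uniformly parabolic viscosity solutions; once this is accepted, the remainder is a direct barrier computation.
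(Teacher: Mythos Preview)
Your proof is correct and takes a genuinely different route from the paper's. The paper argues via an energy method: it differentiates the anisotropic perimeter $\int_{[0,1]^N}\phi(-\nabla u,1)\,dx$ in time to obtain $\int_0^\infty\!\int_{[0,1]^N} u_t^2/\psi(-\nabla u,1)\,dx\,dt<\infty$, extracts a sequence $t_n\to\infty$ with $u_t(\cdot,t_n)\to 0$ in $L^2$, uses the $C^{1,\alpha}$ compactness of Proposition~\ref{reg1} to pass to a limit $\bar u$, and identifies $\bar u$ as a periodic odd solution of the anisotropic minimal surface equation, hence $\bar u\equiv 0$ by the strong maximum principle. Your approach instead exploits the half-integer zero set $f(k/2,t)=0$ to set up a Dirichlet problem on $[0,1/2]$ and compares with the explicit barrier $Ae^{-\alpha t}\sin(2\pi s)$. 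What you gain is an explicit exponential decay rate and a considerably more elementary argument (no energy identity, no compactness, no strong maximum principle); what you pay is the need to invoke a bounded-domain comparison principle for viscosity solutions that is external to the paper's toolkit (Theorem~\ref{ex} only gives comparison on the whole space). That invocation is legitimate---the equation $f_t=a(f_s)f_{ss}$ with $a$ continuous and bounded below on the relevant gradient range is covered by the standard parabolic viscosity theory in~\cite{gigabook}---but it is worth stating precisely which result you are using. A minor remark: your oddness symmetry $u\mapsto -u(-x,\cdot)$ actually holds for the equation without using $1$-homogeneity (the sign changes from the chain rule and from the outer minus cancel in the divergence term), so the parenthetical justification is slightly off, though the conclusion is fine.
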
 

 \begin{proof} 
 By uniqueness we have that \[u(x,t)= f(x\cdot e_1, t),\] where $f:\R\times[0, +\infty)\rightarrow \R$ satisfies $f(r,0)=f_0(r)$, $f(r, t)= -f(-r,t)$ and $f(r+z, t)=f(r,t)$  for every $z\in \Z$. 
 Notice also that 
 \begin{equation}\label{lip}
 |\nabla u(x,t)|\le \max_{[0,1]} f_0'\qquad\text{for all $(x,t)\in \R^N\times (0,+\infty)$.}
 \end{equation}
 Differentiating in time the anisotropic perimeter of the graph of $u$, for all $t>0$ we get that
 \begin{eqnarray*} 
 \int_{[0,1]^{N}}  \phi(-\nabla u_0(x), 1) dx &\ge&
 \int_{[0,1]^{N}}  \phi(-\nabla u_0(x), 1) dx-\int_{[0,1]^{N}}  \phi(-\nabla u(x,t), 1) dx \\
  &=&
 - \int_0^t\int_{[0,1]^N}u_t\ \div(\nabla \phi((-\nabla u, 1)dxds
 \\&=&\int_0^t\int_{[0,1]^N}\frac{u_t^2}{\psi(-\nabla u,1)}\,dxds,
 \end{eqnarray*} 
 which implies that 
 \[
 \int_0^{+\infty}\int_0^1 f_t^2\, dxds = \int_0^{+\infty}\int_{[0,1]^N} u_t^2\, dxds \le C,
 \]
 where the constant
 \[
 C:= \left(\max_{|\xi|\le \max_{[0,1]} f_0'} \psi(\xi,1)\right) \int_{[0,1]^{N}}  \phi(-\nabla u_0(x), 1) dx
 \]
 depends only on the initial function $f_0$. It follows that there exists a sequence of times $t_n\to +\infty$ such that 
 $f_t(\cdot,t_n)\to 0$ in $L^2([0,1])$ as $n\to +\infty$. Recalling Proposition \ref{reg1},
 up to extracting a further subsequence we can also assume that $u(\cdot,t_n)\to \bar u(x)$ uniformly
 in $C^1(\R^N)$ as $n\to +\infty$, where $\bar u(x) = \bar f(x\cdot e_1)$ with $\bar f$  $\Z$-periodic.

 Evaluating now \eqref{pdelevel1} at $t=t_n$, after an integration by parts we get that 
 \[
 \int_{[0,1]^N} \frac{u_t(x,t_n)}{\psi(-\nabla u(x,t_n),1)}\,\eta(x\cdot e_1) dx 
 =   \int_{[0,1]^N} \nabla \phi(-\nabla u(x,t_n),1)\cdot e_1\,\eta'(x\cdot e_1) dx
 \]
 for all $\eta\in C^1(\R)$ $\Z$-periodic.
 Passing to the limit as $n\to +\infty$,
 we finally get that the function $\bar u$ satisfies
 \[
  \int_{[0,1]^N} \nabla_x \phi(-\nabla \bar u(x),1)\cdot e_1\,\eta'(x\cdot e_1) dx = 0
 \]
 for all $\eta$, that is,  $\bar u$ is a periodic, odd and smooth solution to the anisotropic minimal surface equation
 \[
 {\rm div}(\nabla_x\phi(-\nabla \bar u,1))=0.
 \]
Recalling that we are assuming \eqref{phiregular}, we get that  by the strong maximum principle, and the periodicity of $\bar u$, we conclude that  $\bar u\equiv 0$.\end{proof}

\begin{theorem}\label{stabplanes} Let $E_0\subseteq \R^{N+1}$  such that  $\partial E_0$ is a Lipschitz surface and assume  that there exists a half-space $H$ for which
\[\lim_{R\to +\infty} d_H(E_0\setminus B(0, R), H\setminus B(0, R))=0\] where  $d_H(A,B)$  is the Hausdorff distance between the sets $A,B$. Then we have  
\[\lim_{t\to +\infty}d_H(E_t, H)=0. \]\end{theorem}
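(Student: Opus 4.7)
The plan is to recast the statement in graphical form and then sandwich the evolution between explicit Lipschitz supersolutions and subsolutions that decay to zero. Up to a rotation and translation I take $H=\{z\le 0\}$. The Lipschitz regularity of $\partial E_0$ together with the Hausdorff asymptotic hypothesis implies that, outside a large ball, $E_0$ coincides with the subgraph of a Lipschitz function $u_0:\R^N\to\R$ with $u_0(x)\to 0$ uniformly as $|x|\to\infty$; I modify $E_0$ on a compact set if necessary so that this representation holds globally. By Corollary~\ref{corex}, $E_t$ is then the subgraph of the solution $u(\cdot,t)$ of \eqref{pdelevel1}, and the claim $d_H(E_t,H)\to 0$ becomes $\|u(\cdot,t)\|_\infty\to 0$. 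Using the symmetry of the flow under $(u,x)\mapsto(-u,-x)$ (valid because $\phi,\psi$ are even), it is enough to prove the upper estimate $\limsup_{t\to\infty}\sup_x u(x,t)\le 0$.

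Fix $\varepsilon>0$ and $R_\varepsilon$ with $u_0\le\varepsilon$ on $\{|x|\ge R_\varepsilon\}$. Setting $M:=\|u_0\|_\infty$, I would build a Lipschitz function $v_0\ge u_0$ with $v_0\equiv\varepsilon$ outside $B(0,R_\varepsilon+1)$ and $v_0\le M+1$ inside, so that $w_0:=v_0-\varepsilon$ is a nonnegative Lipschitz bump with compact support in $B(0,R_\varepsilon+1)$. By Theorem~\ref{ex}, $u\le v$, and since shifting by a constant preserves solutions of \eqref{pdelevel1}, the function $w:=v-\varepsilon$ solves \eqref{pdelevel1} with initial datum $w_0\ge 0$. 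The task thus reduces to showing that the compactly supported nonnegative bump $w$ decays to zero uniformly on $\R^N$ as $t\to\infty$.

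For this I would combine two ingredients. (a) Comparison with shrinking Wulff shapes (Remark~\ref{wulff}): enclosing the bounded bump region $\{(x,z):0\le z\le w_0(x)\}$ in a large dilated Wulff shape $RW_{\phi^0}+y$ confines the boundary of the subgraph above $\{z=0\}$ inside the shrinking shape $\overline R(t)W_{\phi^0}+y$, which vanishes in finite time of order $R^2/\underline\psi$. (b) Comparison with $1$-dimensional periodic barriers (Lemma~\ref{lemmaper}): in order to control $w$ at the tangential locus where the Wulff shape meets the plane, I would use suitably shifted and rescaled $C^{1,1}$ odd $L$-periodic profiles $f_0$ along several coordinate directions; taking the minimum of such profiles preserves the supersolution property, while each one provides a uniformly decaying bound by the lemma. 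Putting (a) and (b) together yields $\sup_x w(x,t)\le\varepsilon$ for all sufficiently large $t$, hence $\sup_x u(x,t)\le 2\varepsilon$; letting $\varepsilon\to 0$ concludes the upper bound, and the symmetric argument gives the lower one.

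The main obstacle is step (b). The odd periodicity required by Lemma~\ref{lemmaper} forces any single $1$-D profile $f_0(x\cdot e_i)$ to take negative values on half of each period, which a priori prevents it from dominating the nonnegative bump $w_0$ globally. One must combine periodic barriers from different coordinate directions (and possibly reflections), taking minima so that the supersolution property is preserved, and dovetail them with the Wulff-shape bound outside the support of $w_0$, in order to produce a valid global upper barrier initially lying above $u_0$. Making this construction rigorous, together with the approximation step needed to pass from Lipschitz to $C^{1,1}$ initial data required by Lemma~\ref{lemmaper}, is the technical crux of the argument.
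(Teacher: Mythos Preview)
Your reduction to the graphical setting and the split into upper and lower bounds are fine (the symmetry $(u,x)\mapsto(-u,-x)$ in fact holds without any evenness of $\psi$, so that remark is harmless). The genuine gap is exactly the one you flag at the end, and your proposed fix does not close it.

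First, step~(a) as stated does not give a valid comparison. The subgraph of $w_0$ contains all of $\{z\le 0\}$, so it cannot be enclosed in a Wulff shape; there is no monotone inclusion between the evolving subgraph and a shrinking $RW_{\phi^0}+y$. In the paper, Wulff shapes are used only from the \emph{exterior} side, to show that for each fixed $t$ one has $u(x,t)<\eta$ for $|x|$ large (hence $\inf_x u(x,t)=0$); they do not control $\sup_x u(x,t)$.

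Second, step~(b) cannot be repaired by taking minima over directions. Any finite family of odd $\Z$-periodic profiles $f_0^{(i)}(x\cdot e_i)$ is simultaneously negative on a nonempty open set (e.g.\ near the lattice points where each $f_0^{(i)}$ vanishes with negative slope), so the pointwise minimum is still negative there and cannot dominate $w_0\ge 0$ at time $0$. No amount of ``dovetailing'' with the Wulff shape fixes this, since the Wulff bound is not available as a graphical supersolution in the first place.

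The missing idea is to argue by contradiction and \emph{shift} the periodic barrier vertically. Set $M(t)=\sup_x u(x,t)$, which is nonincreasing, and suppose $\bar M:=\lim_{t\to\infty}M(t)>0$. Pick $\varepsilon<\bar M/2$ and $\bar t$ with $M(\bar t)\le \bar M+\varepsilon$. Using exterior Wulff shapes (the correct use of Remark~\ref{wulff}) one shows $u(x,\bar t)<\bar M/2$ for $|x|>R$. Now build a single $C^{1,1}$ profile $f_0$ which is $8R\Z$-periodic and, after a vertical shift by $\tfrac34\bar M+\tfrac{\varepsilon}{2}$, is odd: take $f_0\equiv \bar M+\varepsilon$ on $[-R,R]$, decreasing to $\tfrac34\bar M+\tfrac{\varepsilon}{2}$ at $\pm 2R$, and extend by the odd-plus-shift rule. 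Its minimum is $\bar M/2$, so $v_0(x):=f_0(x\cdot e_1)\ge u(x,\bar t)$ everywhere. By Lemma~\ref{lemmaper} the corresponding solution converges uniformly to the shift $\tfrac34\bar M+\tfrac{\varepsilon}{2}<\bar M$, and comparison gives $\limsup_t M(t)<\bar M$, a contradiction. The point is that one does not try to force the barrier down to $0$; one only needs to push $M(t)$ strictly below its putative limit.
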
 

\begin{proof} 
We may assume without loss of generality that $H=\{(x, z)\in \R^{n}\times\R\ |\ z\leq 0\}$. Moreover, by the assumption that $\lim_{R\to +\infty} d(E_0\setminus B(0, R), H\setminus B(0, R))=0$, there exist two Lipschitz functions $u_0, v_0:\R^N\to\R$ such that $\lim_{|x|\to +\infty} u_0(x)=0=\lim_{|x|\to +\infty} v_0(x)$ and  $\{(x, z)\in \R^{n}\times\R\ |\ z\leq u_0(x)\}\subseteq E_0\subseteq \{(x, z)\in \R^{n}\times\R\ |\ z\leq v_0(x)\}$. By  comparison we get that 
$\{(x, z)\in \R^{n}\times\R\ |\ z\leq u(x,t)\}\subseteq E_t\subseteq \{(x, z)\in \R^{n}\times\R\ |\ z\leq v(x,t)\}$, where $u(x,t), v(x,t)$ are the solutions to \eqref{pdelevel1} with initial datum $u_0, v_0$. 

We claim that $\lim_{t\to +\infty} u(x,t)=0=\lim_{t\to +\infty} v(x,t)$ uniformly. If the claim is true, then the result follows. 

It is sufficient to prove the claim only for $u$ (since for $v$ is completely analogous), moreover, we  restrict to the case in which $u_0\geq 0$ (or equivalenty $u_0\leq 0$). Indeed the general case is easily obtained by using as barriers  the solutions with initial data $u_0^+=\max (u_0, 0)$ and $u_0^-=\min(u_0, 0)$.  

So, we prove the claim only for the case  $u_0\geq 0$.  
Note that by comparison, since the constants are stationary solutions, $0\leq u(x,t)\leq \max u_0$ for all $x\in \R^N, t>0$.    

First of all we prove that 
\[\inf_x u(x,t)=0\] for all $t>0$, by comparison with shrinking Wulff shapes as constructed in Remark \ref{wulff}.
Indeed for every $\eps>0$, let us fix $R>0$ such that $u_0(x)\leq \eps$ for all $|x|\geq R$.  
Let $\tilde E_0$ be the subgraph of $\max (u_0, \eps)$. Note that   for $|x|\geq R$, $\partial \tilde E_0$ is a hyperplane, and then at every    $|x|>R$,   there exists  $R_{|x|}>0$, such that $R_{|x|}\to +\infty$ as $|x|\to +\infty$ and that $\tilde E_0$ satisfies at $x$ the exterior $R_{|x|}W_{\phi^0}$ condition.   
So, arguing as in the proof of Theorem \ref{selfsimilar}, we get that for every $t>0$, there exists $K>0$ such that $|u(x,t)|\leq 2\eps$ for $|x|>K$. 
This gives the desired property  $\inf _xu(x,t)=0$.

Now we note that by comparison, $M(t)=\sup_{x} u(x,t)$ is decreasing. We define $0\leq \bar M:=\lim_{t\to +\infty} M(t)=\inf_t M(t)$.  Now we claim that \[\bar M=0.\] 
If the claim holds, then we get the conclusion.   

 Assume by contradiction that $\bar M>0$. We fix $0<\eps<\frac{\bar M}{2}$ and $\bar t>0$ such that $M(\bar t)\leq \bar M+\eps$. We fix also $R=R(\bar t)$ such that $u(x, \bar t)<\frac{\bar M}{2}$ for all  $|x|>R$.   
Now we shall get a contradiction by constructing a  periodic barrier  as in Lemma \ref{lemmaper} (up to suitable vertical  translations).
We fix a  smooth even  function $f_0:[-2R, 2R] \to\R$, such that  $f_0(z)=\bar M+\eps$ for $z\in[-R, R]$, $f_0(-2R)=f_0(2R)=  \frac{3}{4}\bar M+\frac{\eps}{2}$ and $f_0(z)$ is increasing in $(-2R, -R)$ and decreasing in $(R,2R)$. 
 Now we extend it to a function $f_0:[-2R, 6R]\to \R$ by putting \[f_0(z+2R)=-f_0(-z+2R)+\frac{3}{2}\bar M+\eps.\] Note that $f_0(z+2R)- \frac{3}{4}\bar M-\frac{\eps}{2}$ is an odd function. Finally, we extend it by periodicity  to be a $8R\Z$ periodic function. 

Let $v_0(x):=f_0(x\cdot e_1)$, and 
 note that by construction, $u(x, \bar t)\leq v_0(x)$ for all $x\in \R^N$ and then by comparison
  \[u(x,t+\bar t)\leq v(x,t),\qquad \text{and in particular }  \limsup_{t\to +\infty} u(x,t)\leq \lim_{t\to +\infty} v(x,t)\] where $v(x,t)$ is the solution to \eqref{pdelevel1} with initial datum $v_0$. 
  Now by Lemma \ref{lemmaper} we get that $\lim_{t\to +\infty}v(x,t)=\frac{3}{4}\bar M+\frac{\eps}{2}$  uniformly.  Since $\frac{3}{4}\bar M+\frac{\eps}{2}<\bar M$,  by our choice of $\eps$, 
we get that  $\limsup_{t\to +\infty} u(x,t)<\bar M$, in contradiction  with the definition of $\bar M$. 
 \end{proof}


   \begin{bibdiv}
\begin{biblist}
 \bib{a}{article}{
AUTHOR = {Andrews, Ben},
     TITLE = {Volume-preserving anisotropic mean curvature flow},
   JOURNAL = {Indiana Univ. Math. J.},
    VOLUME = {50},
      YEAR = {2001},
    NUMBER = {2},
     PAGES = {783--827},
}
		
\bib{bs}{article}{
    AUTHOR = {Barles, Guy}, 
    author={Souganidis, Panagiotis E.},
     TITLE = {A new approach to front propagation problems: theory and
              applications},
   JOURNAL = {Arch. Rational Mech. Anal.},
    VOLUME = {141},
      YEAR = {1998},
    NUMBER = {3},
     PAGES = {237--296},
}

\bib{bccn}{article}{  
    AUTHOR = {Bellettini, Giovanni}, 
    author={Caselles, Vicent},
    author={Chambolle, Antonin},
    author={Novaga, Matteo},
     TITLE = {Crystalline mean curvature flow of
              convex sets},
   JOURNAL = {Arch. Rat. Mech. Anal.},
    VOLUME = {179},
      YEAR = {2006},
    NUMBER = {1},
     PAGES = {109--152},
}

\bib{cn}{article}{
    AUTHOR = {Cesaroni, Annalisa},
    author={Novaga, Matteo},
     TITLE = {Fractional mean curvature flow of Lipschitz graphs},
   JOURNAL = {Arxiv preprint 2103.11346},
      YEAR = {2021},
     }

\bib{cmnp1}{article}{  
    AUTHOR = {Chambolle, Antonin}, 
    author={Morini, Massimiliano}, 
    author={Novaga, Matteo}, 
    author={Ponsiglione, Marcello},
     TITLE = {Existence and uniqueness for anisotropic and crystalline mean
              curvature flows},
   JOURNAL = {J. Amer. Math. Soc.},
    VOLUME = {32},
      YEAR = {2019},
    NUMBER = {3},
     PAGES = {779--824}, 
     }

\bib{cmnp2}{article}{  
    AUTHOR = {Chambolle, Antonin}, 
    author={Morini, Massimiliano}, 
    author={Novaga, Matteo}, 
    author={Ponsiglione, Marcello},
         TITLE = {Generalized crystalline evolutions as limits of flows with
              smooth anisotropies},
   JOURNAL = {Anal. PDE},
    VOLUME = {12},
      YEAR = {2019},
    NUMBER = {3},
     PAGES = {789--813},
}
    
    \bib{cmp}{article}{  
    AUTHOR = {Chambolle, Antonin}, 
    author={Morini, Massimiliano}, 
    author={Ponsiglione, Marcello},
TITLE = {Existence and uniqueness for a crystalline mean curvature
              flow},
   JOURNAL = {Comm. Pure Appl. Math.},
     VOLUME = {70},
      YEAR = {2017},
    NUMBER = {6},
     PAGES = {1084--1114},	
     }	
		
\bib{clutt}{article}{ 
    AUTHOR = {Clutterbuck, Julie}, 
    author={Schn\"{u}rer, Oliver C.},
     TITLE = {Stability of mean convex cones under mean curvature flow},
   JOURNAL = {Math. Z.},
    VOLUME = {267},
      YEAR = {2011},
    NUMBER = {3-4},
     PAGES = {535--547},
}
 
 \bib{eh}{article}{
author = {Ecker, Klaus},
    author = {Huisken, Gerhard},
     TITLE = {Mean curvature evolution of entire graphs},
   JOURNAL = {Ann. of Math. (2)},
       VOLUME = {130},
      YEAR = {1989},
    NUMBER = {3},
     PAGES = {453--471},
}

 \bib{e}{book}{ 
    AUTHOR = {Ecker, Klaus},
     TITLE = {Regularity theory for mean curvature flow},
    SERIES = {Progress in Nonlinear Differential Equations and their
              Applications},
    VOLUME = {57},
 PUBLISHER = {Birkh\"{a}user Boston, Inc., Boston, MA},
      YEAR = {2004},
     PAGES = {xiv+165},
}
	
 \bib{ggh}{article}{
 AUTHOR = {Giga, Mi-Ho},
    AUTHOR = {Giga, Yoshikazu},
    AUTHOR = {Hontani, Hidekata},
     TITLE = {Self-similar expanding solutions in a sector for a crystalline
              flow},
   JOURNAL = {SIAM J. Math. Anal.},
      VOLUME = {37},
      YEAR = {2005},
    NUMBER = {4},
     PAGES = {1207--1226}, 
}
 
\bib{gigabook}{book}{
    AUTHOR = {Giga, Yoshikazu},
     TITLE = {Surface evolution equations},
    SERIES = {Monographs in Mathematics},
    VOLUME = {99},
      NOTE = {A level set approach},
 PUBLISHER = {Birkh\"{a}user Verlag, Basel},
      YEAR = {2006},
     PAGES = {xii+264},
     }
		
\bib{gp1}{article}{
    AUTHOR = {Giga, Yoshikazu},
    author={Po\v{z}\'{a}r, Norbert},
     TITLE = {Approximation of general facets by regular facets with respect
              to anisotropic total variation energies and its application to
              crystalline mean curvature flow},
   JOURNAL = {Comm. Pure Appl. Math.},
    VOLUME = {71},
      YEAR = {2018},
    NUMBER = {7},
     PAGES = {1461--1491},
}

\bib{gp2}{article}{
    AUTHOR = {Giga, Yoshikazu},
    author={Po\v{z}\'{a}r, Norbert},
TITLE = {A level set crystalline mean curvature flow of surfaces},
   JOURNAL = {Adv. Differential Equations},
    VOLUME = {21},
      YEAR = {2016},
    NUMBER = {7-8},
     PAGES = {631--698},
}

	
\bib{hu}{article}{
   author={Huisken, Gerhard},
   title={Flow by mean curvature of convex surfaces into spheres},
   journal={J. Differential Geom.},
   volume={20},
   date={1984},
   number={1},
   pages={237--266},
   issn={0022-040X},
}

\bib{lieb}{book}{
    AUTHOR = {Lieberman, Gary M.},
     TITLE = {Second order parabolic differential equations},
 PUBLISHER = {World Scientific Publishing Co., Inc., River Edge, NJ},
      YEAR = {1996},
     PAGES = {xii+439},
      ISBN = {981-02-2883-X},
}

\bib{lunardi}{book}{
    AUTHOR = {Lunardi, Alessandra},
     TITLE = {Analytic semigroups and optimal regularity in parabolic
              problems},
    SERIES = {Modern Birkh\"{a}user Classics},
 PUBLISHER = {Birkh\"{a}user/Springer Basel AG, Basel},
      YEAR = {1995},
     PAGES = {xviii+424},
      ISBN = {978-3-0348-0556-8; 978-3-0348-0557-5},
}
%

\bib{n}{article}{
    AUTHOR = {Nara, Mitsunori},
    author= {Taniguchi, Masaharu},
     TITLE = {The condition on the stability of stationary lines in a
              curvature flow in the whole plane},
   JOURNAL = {J. Differential Equations},
      VOLUME = {237},
      YEAR = {2007},
    NUMBER = {1},
     PAGES = {61--76},
}
\bib{tru}{book}{
    AUTHOR = {Trudinger, Neil S.},
     TITLE = {On regularity and existence of viscosity solutions of
              nonlinear second order, elliptic equations},
 BOOKTITLE = {Partial differential equations and the calculus of variations,
              {V}ol. {II}},
    SERIES = {Progr. Nonlinear Differential Equations Appl.},
    VOLUME = {2},
     PAGES = {939--957},
 PUBLISHER = {Birkh\"{a}user Boston, Boston, MA},
      YEAR = {1989},}
   	
\bib{w}{article}{
    AUTHOR = {Wang, Lu},
     TITLE = {A {B}ernstein type theorem for self-similar shrinkers},
   JOURNAL = {Geom. Dedicata},
    VOLUME = {151},
      YEAR = {2011},
     PAGES = {297--303},
}
	
\end{biblist}\end{bibdiv}
  \end{document}